\documentclass[a4paper,11pt,reqno]{amsart} 

\pdfoutput=1

\usepackage{fullpage}

\usepackage[sort&compress,comma,square,numbers]{natbib}

\usepackage[indent,skip=.2\baselineskip]{parskip}

\usepackage[T1]{fontenc}
\usepackage[utf8]{inputenc}

\usepackage{lmodern}

\usepackage{microtype} 

\usepackage{url}

\usepackage{amsmath}
\usepackage{amssymb}
\usepackage{mathrsfs} 
\usepackage{bm}
\usepackage{bbm} 

\usepackage[dvipsnames,table]{xcolor}

\usepackage{thmtools,mathtools}
\usepackage[unicode,psdextra,pdfusetitle,
bookmarks, bookmarksdepth=2, colorlinks=true, linkcolor=blue!50!black, citecolor=Black!70, urlcolor=blue!50!black,hypertexnames=false]{hyperref}

\usepackage{bookmark}
\hypersetup{bookmarksnumbered}

\usepackage{amsthm}
\usepackage[nameinlink]{cleveref}

\newtheorem{theorem}{Theorem}[section]

\newtheorem{proposition}[theorem]{Proposition}
\newtheorem{corollary}[theorem]{Corollary}

\theoremstyle{definition}
\newtheorem{definition}[theorem]{Definition}
\newtheorem{example}[theorem]{Example}

\newtheorem{remark}[theorem]{Remark}

\newtheorem{theoremalphabetic}{Theorem}

\crefname{ex}{Example}{Examples}
\crefname{thm}{Theorem}{Theorems} 
\crefname{lem}{Lemma}{Lemmas}
\crefname{prop}{Proposition}{Propositions}
\crefname{cor}{Corollary}{Corollaries} 
\crefname{con}{Conjecture}{Conjectures} 
\crefname{def}{Definition}{Definitions}
\crefname{alg}{Algorithm}{Algorithms}
\crefname{rmk}{Remark}{Remarks}
\crefname{thmalph}{Theorem}{Theorems}

\numberwithin{equation}{section}
\numberwithin{figure}{section}

\DeclareTextFontCommand{\bfemph}{\bfseries\em}
\newcommand{\term}{\bfemph}

\usepackage{graphicx}
\usepackage[font=scriptsize,labelfont=bf]{caption}
\usepackage{subcaption}
\usepackage{float}
\usepackage{wrapfig}

\usepackage{multirow}
\usepackage{array}

\usepackage{enumitem}

\definecolor{specialblue}{rgb}{0.07,0.35,0.57}
\definecolor{specialred}{rgb}{0.7,0.2,0.2}

\usepackage[perpage]{footmisc}

\usepackage{pgf,tikz} 
\usetikzlibrary{arrows}
\usetikzlibrary{decorations.markings}
\usetikzlibrary{matrix}
\usetikzlibrary{cd}
\usetikzlibrary{babel}
\usetikzlibrary{positioning}
\usetikzlibrary{shapes}
\usetikzlibrary{arrows.meta,petri}
\usetikzlibrary{calc}

\usepackage{pgfplots}
\pgfplotsset{compat=1.8}

\DeclareMathOperator{\conv}{conv} 
\DeclareMathOperator{\Trop}{Trop}  
\DeclareMathOperator{\argmin}{argmin}  

\newcommand{\N}{{\mathbb{N}}}
\newcommand{\Z}{{\mathbb{Z}}}

\newcommand{\R}{{\mathbb{R}}}

\newcommand{\mC}{{\mathcal{C}}}

 \renewcommand{\k}{\kappa}

\usepackage[version=3]{mhchem}

\title{Persistence of bounded trajectories in positively transverse reaction networks}

\author{Elisenda Feliu}
\date{\today}

\begin{document}


\begin{abstract}
We introduce positively transverse   reaction networks and show that, with mass-action kinetics, no bounded trajectory with positive initial condition has a boundary omega-limit point. These networks are characterized by a geometric property of a collection of polytopes defined by a subset of complexes of the network. 
\end{abstract}

\maketitle


\section{Introduction}
We consider reaction networks with mass-action kinetics and the problem of deciding whether trajectories of the associated mass-action systems admit boundary $\omega$-limit points. Much of the previous work to understand this problem has focused on the resolution of several conjectures on the long-term behavior of the trajectories, which in turn were motivated by the Global Attractor Conjecture (GAC). 
A proof of this conjecture was recently proposed in version 3 of Craciun's preprint \cite{craciun:GAC}, which completes the approach outlined in the 2015 version.

The setting of the GAC dates back to the 1970s and considers a mass-action system that has a positive complex-balancing steady state.  Horn and Jackson showed that in this case every stoichiometric compatibility class intersecting the positive orthant contains a unique positive steady state, which is itself complex-balancing~\cite{HornJackson1972}.  
For every complex-balancing steady state $c^*\in \R^n_{>0}$, they showed the existence of  a strict Lyapunov function on the positive part of the stoichiometric compatibility class containing  $c^*$. In particular, $c^*$ is locally asymptotically stable relative to its compatibility class. The GAC, first formulated by Horn~\cite{Horn74} and later popularized under this name in \cite{TDS}, asserts that this local asymptotic stability is in fact global: every trajectory with positive initial condition converges to the unique complex-balancing steady state in its stoichiometric compatibility class.

Siegel and MacLean  showed in~\cite{SiegelMacLean} that the GAC holds if the trajectories do not have $\omega$-limit points at the boundary of $\R^n_{\geq 0}$, and  efforts to prove the GAC  led to the formulation of broader conjectures on the behavior of the  trajectories near the boundary. These include, for example, the \emph{persistence conjecture}  \cite{CNP},\cite[Rem. 6.1.E]{Feinberg1987}, the \emph{boundedness conjecture} \cite{Anderson2011} and the  (extended) \emph{permanence conjecture} \cite{CNP}. See \cite[\S4]{GMS2} and the introductions of \cite{Pantea,craciun:GAC}  for a more in-depth exposition of these conjectures. 
The persistence, boundedness and permanence conjectures for weakly reversible networks are included in the OpenAI data dump from October 2026. 

The past efforts to prove the GAC enriched the reaction network literature with a better understanding of the behavior of trajectories of mass-action systems close to the boundary, in some cases beyond complex-balancing systems and weakly reversible networks. Early results focused for example on the geometric study of the faces of stoichiometric compatibility classes, viewed as  polyhedra. Craciun, Dickenstein, Shiu, and Sturmfels~\cite{TDS}  showed that vertices of these polyhedra cannot be $\omega$-limit points of positive trajectories of complex-balancing systems (see also Anderson~\cite{Anderson08}, who obtained this for weakly reversible deficiency zero networks).  Subsequent work considered other classes of boundary faces, including  facets (Anderson and Shiu \cite{AndersonShiu10}) and weakly dynamically non-emptiable faces  (Johnston and Siegel~\cite{JohnstonSiegel}). 

Related methods established the GAC for several important families, including networks with a single linkage class (Anderson  \cite{Anderson11}), networks with three species (Craciun, Nazarov and Pantea \cite{CNP}), networks with stoichiometric compatibility classes of dimension  two (Anderson and Shiu \cite{AndersonShiu10}) or   three (Pantea \cite{Pantea}), and \emph{strongly endotactic} reaction networks (Gopalkrishnan, Miller and Shiu \cite{GMS2}). The latter networks are defined by a geometric condition that encodes that  reactions dominating in extreme directions should point back toward the interior of the reactant polytope rather than further outward. 
The PhD thesis of Lawrence \cite{Lawrence-thesis} used the tier approach of \cite{Anderson11} to prove the GAC for a class of reversible, mass-preserving networks. Wiuf showed the GAC for a class of networks with complex-balancing embedded networks \cite{wiuf:GAC}. 

Here we revisit the geometric viewpoint used to show that strongly endotactic networks are  permanent \cite{GMS2}, which  relates limit directions of trajectories to the geometry of the convex hull of (some) complexes of the network.   We focus exclusively on the existence of boundary $\omega$-limit points of bounded trajectories, and hence we do not address the more general problem of when trajectories are   bounded.  This allows us to focus on  a smaller set  of limit directions compared to \cite{GMS2}. 
 This simplification was also made in \cite{Pantea,CNP}, where \emph{lower endotactic} networks were considered.  

 We introduce the class of \emph{positively transverse networks}, which includes, but is not limited to, all networks such that,
for every vector  $a\in \R^{n}_{\geq 0}$ not in the orthogonal complement of the stoichiometric subspace, some complex in the set
\[\Gamma\coloneqq \argmin \{a \cdot y: y \text{ complex}\} \] 
is a reactant of a reaction with product not in $\Gamma$ (see \Cref{def:positive}). 
The precise definition relaxes this requirement to hold only for a subset of such vectors. 

Strongly endotactic networks that are weakly reversible are positively transverse, as they satisfy this condition  for all vectors $a\in \R^n$ not in the orthogonal complement of the stoichiometric subspace (that is, not only for vectors with nonnegative entries). A notable difference from the previous work \cite{GMS2} is that in the definition of $\Gamma$ above, we consider 
\emph{all} complexes of the network, and not only the reactants. This makes the proofs more straightforward at the cost of having a more restrictive condition on the complexes. This distinction has no effect if the network is weakly reversible or \emph{source-only} \cite{Anderson2020Classes}.

Our main result, \Cref{thm:main}, is the following. 

\begin{theoremalphabetic}\label{thm:A}
If a reaction network is positively transverse, then 
no bounded trajectory of any associated mass-action system with initial condition in $\R^n_{>0}$ has an $\omega$-limit point at the boundary of $\R^n_{\geq 0}$. 
\end{theoremalphabetic}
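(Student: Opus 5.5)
The plan is to argue by contradiction. Suppose $x(t)$ is a bounded trajectory with $x(0)\in\R^n_{>0}$ and that it has an $\omega$-limit point $z$ on the boundary of $\R^n_{\geq 0}$. The trajectory stays in $\R^n_{>0}$ for all $t\ge 0$, and it lies in the stoichiometric compatibility class $x(0)+S$, where $S$ is the stoichiometric subspace.

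The first step extracts a limit direction in logarithmic coordinates. Choose times $t_k\to\infty$ with $x(t_k)\to z$, and set $u_k=\log x(t_k)$. Boundedness gives an upper bound on every coordinate of $u_k$. Since $z$ has some zero coordinate, $\min_i u_{k,i}\to-\infty$. Passing to a subsequence, we have $-u_k/\|u_k\|\to a$. Because the coordinates of $u_k$ are bounded above, $a\in\R^n_{\geq 0}\setminus\{0\}$, and $\operatorname{supp}(a)\subseteq\{i: z_i=0\}$. For a complex $y$ the monomial is $x(t_k)^y=\exp(y\cdot u_k)\approx\exp(-\|u_k\|\,a\cdot y)$. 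So the complexes in $\Gamma=\argmin\{a\cdot y\}$ have the dominant monomials, and every complex outside $\Gamma$ is exponentially smaller. To apply the hypothesis, $a$ must lie in the admissible subset of \Cref{def:positive}, presumably $a\notin S^\perp$ together with some sign or support condition. I expect to check that $a\notin S^\perp$ follows from $a$ being supported on the zero coordinates of $z$. If $a\in S^\perp$, the quantity $a\cdot \log x$ would be controlled by the conservation laws, which contradicts its divergence. If that does not work directly, I would iterate: refine the sequence so that the residual direction lies outside $S^\perp$, using a lexicographic or tier decomposition of $u_k$ in the style of \cite{Anderson11}.

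The second step produces a decreasing quantity and derives the contradiction. I would take a Lyapunov-type function $V(x)=\sum_i a_i\log x_i=a\cdot\log x$, which tends to $-\infty$ along the sequence. Its derivative is
\[\dot V=\sum_{y\to y'}\k_{y\to y'}\,x^y\, a\cdot(y'-y)/x\]
in a suitably weighted form. A cleaner route is to work with $\sum_i a_i \dot x_i/x_i$ directly, or to use $\frac{d}{dt}(a\cdot\log x)=\sum_i a_i\sum \k x^y (y'_i-y_i)/x_i$. The terms with the division by $x_i$ are awkward. I would therefore rather use the geometric picture from \cite{GMS2}. Near $x(t_k)$, the dominant reactions are those with reactant in $\Gamma$. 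For such a reaction, $a\cdot(y'-y)\ge 0$, with strict inequality when the product is not in $\Gamma$. Positive transversality guarantees that at least one dominant reaction has a product outside $\Gamma$. Hence, in a neighbourhood of these points, the trajectory is pushed to increase $a\cdot\log x$, that is, to move away from the face $\{x_i=0: i\in\operatorname{supp}a\}$. This should give $\frac{d}{dt}(-a\cdot\log x)<0$ on a region of the form $\{x: -a\cdot\log x>M,\ x \text{ bounded},\ \log x/\|\log x\| \text{ close to } -a\}$. From this one shows that the trajectory cannot enter arbitrarily deep into that region from the interior, which contradicts $u_k$ diverging along direction $-a$.

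The main obstacle is this last step, making the dominance argument uniform. The sign estimate is only valid in a cone of directions near $a$ and only asymptotically. A trajectory might approach the boundary along a direction where the dominant set changes, or where several coordinates decay at different exponential rates. I would handle this by a compactness argument over the set of admissible directions: cover the compact set of unit vectors $a$ satisfying the hypothesis by finitely many open cones. On each cone, I would choose a vector $a$ for which the estimate holds uniformly, relying on the fact that $\argmin$ sets are upper semicontinuous in $a$. I would then build a piecewise Lyapunov function, or run a ``first time the trajectory enters the region'' argument, to contradict the existence of $z$.
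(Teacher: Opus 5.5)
There is a genuine gap. Your Step 2 carries the whole proof, but the sign claim it rests on is false, and the uniformity problem you flag is left open rather than solved.

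For fixed $a$ one has $\frac{d}{dt}(a\cdot\log x)=\sum_i a_i f_i(x)/x_i$. This is a combination of Laurent monomials $x^{y-e_i}$ of logarithmic order $a\cdot y-a_i$. Which terms dominate is therefore decided by minimizing $a\cdot y-a_i$ over pairs consisting of a reaction $y\rightarrow y'$ and a species $i$ with $y'_i\neq y_i$. It is not decided by $\Gamma=\argmin\{a\cdot y\}$. For example, take the reactions $2X_2\rightarrow 3X_2$ and $X_1\rightarrow 3X_2$ with rate constants $\k_1,\k_2$, and $a=(1,\varepsilon)$ with $0<\varepsilon<\tfrac12$. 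The face $\Gamma=\{2X_2\}$ is transverse. Yet along $\log x=-\lambda a$ one gets $\frac{d}{dt}(a\cdot\log x)=-\k_2+\varepsilon\k_1 x_2+3\varepsilon\k_2 x_1/x_2\to-\k_2<0$ as $\lambda\to\infty$.

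A first-entry argument for a cone around a single direction also cannot work. The trajectory may reach the deep part of the cone from neighbouring directions where you have no estimate. Covering by finitely many cones with a piecewise Lyapunov function is exactly the hard part, and you do not supply it.

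The missing idea is the paper's function $W(x)=\sum_{i\in Z}(x_i\log x_i-x_i)$. For it, $\frac{d}{dt}W=\bigl(\sum_{k\in Z}\log x_k\bigr)\,g(x)$ with $g(x)=\sum_{i\in Z}a_i(x)f_i(x)$. Here $a(x)$ is the \emph{current} normalized logarithmic direction, and nothing is divided by $x_i$. Taking the last entry time into the box $\{x_i\leq\delta_j,\ i\in Z\}$ and applying the mean value theorem yields times $t_j\to\infty$ with $g(x(t_j))<0$ (\Cref{prop:Wprimepos}). So no uniform estimate is ever needed. Set $u_j=\prod_{i\in Z}x_i(t_j)$ and normalize the $Z$-parts of the monomials to $z_{j,y}$. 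Then $(-\log u_j)\,g(x(t_j))$ becomes a combination, with coefficients bounded away from $0$ and $\infty$, of the terms $z_{j,y}\log(z_{j,y}/z_{j,y'})$. This tends to $+\infty$ as soon as some reaction leaves the support of $\lim_j z_j$, which gives the contradiction.

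Separately, you are implicitly using the sufficient condition from the introduction (all nonnegative $a\notin S^\perp$). \Cref{def:positive} is weaker. It only concerns relevant siphons $Z$, and faces of $P_Z$ (complexes of $G_Z$, projected to $\R^Z$) that are determined by a vector in $\R^Z_{>0}$. Even such a face may fail to be $Z$-transverse, provided $N_Z(\Gamma)\cap\Trop^+(S_Z)\cap\Delta_Z=\varnothing$. Your limit direction may vanish on part of the zero support (e.g.\ $x_1\sim e^{-t}$, $x_2\sim 1/t$). In that case the hypothesis says nothing about $\argmin\{a\cdot y\}$. You also never show that $a$ lies in $\Trop^+(S_Z)$. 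To close this you must:
\begin{enumerate}
\item take $Z$ inclusion-maximal among zero supports of $\omega$-limit points, so that along the new times $t_j$ the coordinates outside $Z$ converge to positive values;
\item replace the face selected by $a_\infty$ by the subface spanned by the complexes with $\lim_j z_{j,y}>0$, and prove that this subface is determined by a strictly positive vector, by splitting $-(\log u_j)a_j$ orthogonally along its direction space (\Cref{prop:selectpos});
\item derive $a_\infty\in\Trop^+(S_Z)$ from $x(t_j)-x^*\in S$, where $x^*=\lim_j x(t_j)$, using signed circuits (\Cref{prop:bergman}).
\end{enumerate}
Finally, your justification of $a\notin S^\perp$ is incorrect as stated: conservation laws control $a\cdot x$, not $a\cdot\log x$. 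The correct argument is that $a\in S^\perp$ would give $0=a\cdot z=a\cdot x(0)>0$.
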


Positively transverse networks need not be weakly reversible, nor even endotactic (see network (c) in \Cref{ex:ex}). Consequently, \Cref{thm:A} does not follow from the recently proposed proof of the GAC by Craciun \cite{craciun:GAC} or the AI-generated proofs of the persistence and permanence conjectures \cite{AI-persistence,AI-permanence}, all of which still await validation by the
mathematical community. Nor would \Cref{thm:A} follow from the extended permanence conjecture \cite{CNP}, which concerns endotactic networks. 

In order to prove  \Cref{thm:A} and derive its consequences, we start with a technical study of logarithmic directions of trajectories with an $\omega$-limit point at the boundary. This is covered in \Cref{sec:technical}.
In \Cref{sec:positive} we use these technical results to derive \Cref{thm:A}. 
A feature of this work is that the proofs are elementary.

\medskip
\noindent
\textbf{Funding. } This project has been funded by the European Union under the Grant Agreement number 101044561, POSALG. Views and opinions expressed are those of the authors only and do not necessarily reflect those of the European Union or European Research Council (ERC).

\medskip
  \noindent
 \textbf{AI disclosure. } The research for this work started without the use of AI in summer 2025.  During this phase the main  ideas were identified: the function $g$, the strategy of proof, and a weaker version of \Cref{thm:main} for weakly reversible networks requiring the faces selected by nonnegative vectors to be transverse. The project was then put on hold for several months due to a lack of time. Upon resuming the work, amid the paradigm shift in mathematics brought about by LLMs, I often engaged in discussions with GPT-5.6 Sol (OpenAI) to address technical aspects of the proofs in \Cref{sec:technical} and to quickly reject ideas. I mainly got help with the proof of \Cref{prop:Wprimepos}, which I  initially only had for trajectories converging to the boundary. The rest of the proofs in that section are based mostly on my own arguments.  GPT-6.1 Sol (OpenAI) and Claude Opus 5.5 (Anthropic) were used to detect inaccuracies and verify the correctness of the arguments at the final stage. No model has been used  to write or to structure the manuscript. The author assumes responsibility for all content.

\medskip
\noindent
\textbf{Notation. } 
We let $\R_{>0}$ and $\R_{\geq 0}$ denote the sets of positive and nonnegative real numbers, respectively. For a set $B\subseteq \R^n$ we let $B^\circ$ denote its relative interior. 
For $a\in \Z^\ell_{\geq 0}$  and $x\in \R^\ell_{\geq 0}$, we let   $x^a = \prod_{i=1}^\ell x_i^{a_i}$ with the convention that $0^0 = 1$.  For $v,w\in \R^n$, $v\cdot w$ refers to their scalar product.

\section{Logarithmic limits and \texorpdfstring{$\omega$}{omega}-limit points}\label{sec:technical}

In this section we develop the technical results towards our main theorems in \Cref{sec:positive}. 

\subsection{Preliminaries} We start by introducing several mathematical objects and definitions. 
We consider a reaction network $G=(V,E)$ with $n$ species numbered  $X_1,\dots,X_n$, where $V\subseteq \Z^{n}_{\geq 0}$ is the set of complexes and $E$ is the set of reactions. 
As usual, a complex $y\in V$ is also written as $y_1X_1+\dots+y_nX_n$ in examples. 

With mass-action kinetics and a fixed choice of reaction rate constants $\k\in \R^{E}_{>0}$,  the associated mass-action system, modeling the time evolution of the concentration $x_i$ of $X_i$  for $i=1,\dots,n$, is 
\begin{equation}\label{eq:mass}
\dot{x} = f(x)\qquad x\in \R^n_{\geq 0}\,,
\end{equation}
where
\begin{equation}\label{eq:f} 
f_i(x)\coloneqq \sum_{y\rightarrow y'\in E} \k_{y\rightarrow y'} (y_i'-y_i) x^y\qquad \text{for } i=1,\dots,n\,. 
\end{equation}
We say that a trajectory $x(t)$ of \eqref{eq:mass} is \term{positive} if the initial condition is in $\R^n_{>0}$. Observe that $\R^n_{>0}$  is forward invariant for the trajectories of \eqref{eq:mass} by \cite{volpert}. If $x(t)$ is defined for all $t\geq 0$, for example if it is bounded, we let $\omega(x(t))$ denote the set of $\omega$-limit points.

The \term{stoichiometric subspace} of the network is $S\coloneqq \langle y'-y : y\rightarrow y'\in E\rangle$. For every $x\in \R^n_{\geq 0}$, the set  $(x+S)\cap \R^n_{\geq 0}$  is called its stoichiometric compatibility class  and contains the trajectory with initial condition $x$. A nonzero vector $w\in S^\perp \cap \R^n_{\geq 0}$ is said to be a \term{nonnegative conservation vector}.

For a point $x\in \R^n$ we refer to the set $\{i: x_i=0\}$ as the \term{zero support} of $x$ and to  the set $\{i: x_i\neq 0\}$ as its \term{support}. 

For a nonempty set of indices $Z\subseteq \{1,\dots,n\}$, we let $G_Z=(V_Z,E_Z)$ be the network obtained by considering all linkage classes of $G$ that are not constant on the species indexed by $Z$ and that involve a species $X_i$ for some $i\in Z$. Specifically,  a linkage class $\mC=(V_\mC,E_\mC)$ is part of $G_Z$ exactly when
\begin{equation}\label{eq:GZ}
y'_i-y_i\neq 0 \quad \text{for some }i\in Z\text{ and }y\rightarrow y'\in E_\mC\,.
\end{equation}
In some works, the projection of $G$ onto the species indexed by $Z$ is considered instead, see e.g. \cite{Anderson11,GMS2}. Here,   the network $G_Z$ depends formally on $n$ species, although some species may not appear in any of the complexes. 

Given $y\in \R^n$, we let 
\[ \bar{y} = (y_i)_{i \in Z} \]
 denote the projection onto the  coordinates indexed by $Z$. 
We let $S_Z$ be the projection of $S$ onto the coordinates $i\in Z$, which by construction equals
\[ S_Z  = \big\langle (y_i'-y_i)_{i\in Z} :  y\rightarrow y'\in E \big\rangle = \big\langle \bar{y}'-\bar{y} :  y\rightarrow y'\in E_Z \big\rangle  \subseteq \R^{Z}\,.\]

We let  $P_Z$ be the convex hull of $V_Z\subseteq \R^n$. 
We denote the $Z$-inner normal cone at a face $\Gamma$ of $P_Z$ as 
\[ N_Z(\Gamma) \coloneqq \{a \in \R^Z :  \Gamma\subseteq \argmin  \{  a\cdot \bar{y} : y \in P_Z \}\} \, , \]
and let 
\[ N_Z^*(\Gamma) \coloneqq \{a \in \R^Z :  \Gamma =  \argmin  \{ a\cdot \bar{y} : y \in P_Z \}\} \, . \]
We say that $a\in \R^{Z}$ determines the face $\Gamma$  if $a\in N_Z^*(\Gamma)$.

Let  $G'=(V',E')$ be a subnetwork of $G_Z$ consisting of a union of linkage classes and let $P'$ be the convex hull of $V'$. 
We say  that a face $\Gamma$ of $P'$ is \term{$Z$-transverse} for $G'$ if  there exists a reaction $y\rightarrow y'\in E'$ such that  $y\in \Gamma$ and $y'\notin \Gamma$ (this choice of wording adapts \cite[Def. 4.3]{anderson:cappelletti:tier}).

We consider the standard simplex in $\R^Z$ defined as
\begin{equation}\label{eq:delta}
\Delta_Z \coloneqq  \Big\{ a\in \R^Z_{\geq 0} :  \sum_{i \in Z} a_i =1\Big\} \subseteq \R^Z_{\geq 0} \,. 
\end{equation}
 Its relative interior $\Delta^\circ_Z$ consists of the subset of points with all coordinates positive. 
\smallskip
Given $x\in \R^n_{>0}$ such that $u\coloneqq \prod_{i\in Z} x_i \neq 1$, we  let 
\begin{equation}\label{eq:AR}
a(x) \coloneqq  \left(\frac{\log x_i }{\sum_{k\in Z}  \log x_k}\right)_{i\in Z}\,.
\end{equation}
We will repeatedly use that 
\begin{equation}\label{eq:xR} 
x_i =  u^{a_{i}(x)} \qquad \text{  for all }i\in Z\,,
\end{equation}
since
\[ 
a_{i}(x)   \log u = \frac{\log x_i}{\sum_{k\in Z}  \log x_k} \log\Big(\prod_{k\in Z} x_k \Big) =\log x_i  \, .
\]

With $f_i$ as in \eqref{eq:f}, we consider the function
\begin{equation}\label{eq:ga}
\begin{aligned}
g(x) &\coloneqq \sum_{i\in Z} a_i(x) f_i(x)  =   \sum_{i\in Z} a_{i}(x)  \sum_{y\rightarrow y'\in E_Z}  (y'_i-y_i) \k_{y\rightarrow y'}  x^y   \\
&
= \sum_{y\rightarrow y'\in E_Z}    (a(x) \cdot (\bar{y}' -   \bar{y})) \k_{y\rightarrow y'}  x^{y}\,.  
\end{aligned}
\end{equation}
Here we have used that a reaction contributes a summand to  $f_i$ only if $y_i'-y_i\neq 0$. 
When considering the function $g$, the set $Z$ is implicit from the context. 

More generally, if $G'=(V',E')$ is a subnetwork of $G_Z$, we consider 
\[ g_{G'}(x) \coloneqq    \sum_{y\rightarrow y'\in E'}    (a(x)\cdot (\bar{y}' -   \bar{y})) \k_{y\rightarrow y'}    x^{y}\,.  \]
The function $g$ in \eqref{eq:ga} is the sum of $g_\mC$ over all linkage classes $\mC$ of $G_Z$.

 \subsection{Negative logarithmic limits} 
 
\begin{definition}\label{def:bad}
Let $x(t)$ be a  positive trajectory of \eqref{eq:mass},  $Z\subseteq \{1,\dots,n\}$ nonempty, and   $h\colon D\rightarrow \R$ be a   function defined on $D\coloneqq \{x\in \R^n_{> 0} : \prod_{i\in Z} x_i \neq 1\}$. A point $a_\infty\in \Delta_Z$ is called a \term{negative logarithmic limit} for $(x(t),Z,h)$ if there 
  exists  a sequence $\{t_j\}_{j\in \N}\subset \R_{>0}$ satisfying
\begin{itemize}
\item $\{t_j\}$ tends to $\infty$ as $j$ tends to $\infty$.
\item $x_i(t_j)<1$ for all $i\in Z$ and $j\in \N$.
\item  $\{x(t_j)\}$ converges to a point in $\R^n_{\geq 0}$ with zero support $Z$.
\item $\{a(x(t_j))\}$ converges to $a_\infty$.
\item $h(x(t_j)) <0$ for all $j\in \N$.
\end{itemize}
\end{definition}

\begin{proposition}\label{prop:Wprimepos}
Let $x(t)$ be a bounded positive trajectory of \eqref{eq:mass}, $Z\subseteq \{1,\dots,n\}$ be nonempty, and consider the associated function $g$ from \eqref{eq:ga}. 

If $Z$ is inclusion-maximal among all possible zero supports of points in $\omega(x(t))$, then there exists a negative logarithmic limit for $(x(t),Z,g)$.  
\end{proposition}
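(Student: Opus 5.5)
The plan is to read $g$ as a time derivative, up to a negative factor, and then to use that a bounded negative function creeping up towards $0$ must have positive derivative infinitely often. Set
\[ V(x)\coloneqq \sum_{i\in Z} \big(x_i\log x_i - x_i + 1\big), \qquad x\in \R^n_{>0}\,. \]
Along a trajectory, $\frac{d}{dt}V(x(t)) = \sum_{i\in Z} f_i(x(t))\log x_i(t)$. Whenever $x_i<1$ for all $i\in Z$, we have $\log u<0$ with $u=\prod_{i\in Z}x_i$, so $u\neq 1$. In that case $a(x)\in\Delta_Z^\circ$, and by \eqref{eq:ga}
\[ g(x) = \frac{1}{\log u}\sum_{i\in Z}f_i(x)\log x_i\,. \]
So on this region, $g(x(t))<0$ exactly when $\frac{d}{dt}V(x(t))>0$. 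The one-variable function $s\mapsto s\log s - s+1$ is strictly decreasing on $(0,1)$, tends to $1$ as $s\to 0^+$, and vanishes at $s=1$. Hence on $\{x: x_i\in(0,1)\ \forall i\in Z\}$ we have $V<|Z|$, and $V(x)\to|Z|$ if and only if $x_i\to 0$ for all $i\in Z$.

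First I fix $p\in\omega(x(t))$ with zero support $Z$ and times $t_j\to\infty$ with $x(t_j)\to p$, so that $V(x(t_j))\to |Z|$ and $x_i(t_j)<1$ for $i\in Z$. Positivity of the trajectory gives $V(x(t))<|Z|$ on the relevant region. After passing to a subsequence, I may assume $V(x(t_{j+1}))>V(x(t_j))$ and $V(x(t_j))>|Z|-\epsilon_0$. Here $\epsilon_0$ is chosen so that $V(x)>|Z|-\epsilon_0$ forces $x_i<1$ for all $i\in Z$; this is possible since each summand is at most $1$ near the boundary. The care needed is that $V>|Z|-\epsilon_0$ must force $x_i<1$ on the whole interval below; I would restrict $V$ to the region where it is meaningful, or note that each summand is at most $1$ only for $x_i\le 1$, and otherwise adjust $\epsilon_0$.

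For each $j$, let $\sigma_j$ be the last time in $[t_j,t_{j+1}]$ at which $V(x(\sigma_j))=V(x(t_j))$. On $(\sigma_j,t_{j+1}]$ we then have $V>V(x(t_j))$, and $V(x(t_{j+1}))>V(x(\sigma_j))$. By the mean value theorem there is $s_j\in(\sigma_j,t_{j+1})$ with $\frac{d}{dt}V(x(s_j))>0$ and $V(x(s_j))>V(x(t_j))$. Consequently $V(x(s_j))\to|Z|$, so $x_i(s_j)\to 0$ and $x_i(s_j)<1$ for all $i\in Z$ and large $j$, and $g(x(s_j))<0$ by the previous paragraph.

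Since the trajectory is bounded, a subsequence of $x(s_j)$ converges to some $q\in\omega(x(t))$. Its zero support contains $Z$, so it equals $Z$ by the inclusion-maximality of $Z$. This is the only place maximality is used. Finally, $a(x(s_j))\in\Delta_Z$, which is compact, so a further subsequence converges to some $a_\infty\in\Delta_Z$. The resulting sequence satisfies all conditions of \Cref{def:bad}, so $a_\infty$ is a negative logarithmic limit for $(x(t),Z,g)$.

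The main obstacle is finding the Lyapunov-like function $V$ whose derivative is $(\log u)\,g$; once this is in hand, the rest is the elementary mean-value selection of $s_j$ together with the maximality argument.
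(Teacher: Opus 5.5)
Your architecture matches the paper's: your $V$ is the paper's $W$ shifted by $|Z|$, you use $\frac{d}{dt}V=(\log u)\,g$ and a mean value argument to find times where $V$ increases, and you finish with maximality plus compactness of $\Delta_Z$. There is, however, a genuine gap at the step ``Consequently $V(x(s_j))\to|Z|$, so $x_i(s_j)\to 0$ and $x_i(s_j)<1$.'' All you know about $s_j$ is that $V(x(s_j))>V(x(t_j))$, and this does not place $x(s_j)$ near the face $\{x_i=0,\ i\in Z\}$. The function $s\mapsto s\log s-s+1$ is nonnegative, increasing on $(1,\infty)$, and exceeds $1$ for $s>e$. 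So $V(x)>|Z|-\epsilon_0$, and even $V(x)>|Z|$, holds as soon as one $Z$-coordinate is large enough. Boundedness of the trajectory does not rule out $Z$-coordinates above $e$ between $t_j$ and $t_{j+1}$.

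Hence no $\epsilon_0$ makes your implication ``$V>|Z|-\epsilon_0\Rightarrow x_i<1$'' true, and adjusting $\epsilon_0$ cannot repair it. Without $x(s_j)\in R\coloneqq\{x\in\R^n_{>0}: x_i<1,\ i\in Z\}$ you lose three things. First, the sign of $\log u$, so $\frac{d}{dt}V>0$ need not give $g<0$. Second, the bound $V<|Z|$, which is what turns $V(x(s_j))>V(x(t_j))$ into $V(x(s_j))\to|Z|$. Third, and consequently, $x_i(s_j)\to 0$.

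The missing idea is to confine the whole arc $x([\sigma_j,t_{j+1}])$ to $R$. At a point of $\overline{R}\setminus R$ (all $x_k\le 1$ and some $x_i=1$) the $i$-th summand vanishes and the others are at most $1$, so $V\le |Z|-1$. Discard finitely many $j$ so that $V(x(t_j))>|Z|-1$. Suppose the arc leaves $R$, and let $\tau<t_{j+1}$ be the last time in $[\sigma_j,t_{j+1}]$ with $x(\tau)\notin R$; this set of times is closed, and $x(t_{j+1})\in R$. Then $x(\tau)\in\overline{R}\setminus R$, so $V(x(\tau))\le|Z|-1<V(x(t_j))\le V(x(\tau))$, a contradiction. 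Hence $x(s_j)\in R$ and $V(x(t_j))<V(x(s_j))<|Z|$, and all your subsequent claims follow.

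This is exactly the role of the paper's boxes $B_j$ and last entry time $d_j$. They guarantee $x(t)\in B_j^\circ$ on $(d_j,s_j]$, so the mean value point automatically satisfies $x_i<\delta_j\to 0$. With this repair your variant is valid. It is even slightly simpler in one respect: $s_j>t_j$ makes divergence of the times immediate, whereas the paper needs a separate compactness argument to show $d_j\to\infty$.
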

\begin{proof}
Let $x^*$ be  an $\omega$-limit point of the trajectory with zero support $Z$ (which exists by hypothesis). 
Consider the following function on $\R^n_{>0}$:
\[ W(x)\coloneqq  \sum_{i\in Z}\, (x_i\log x_i - x_i)  \,.\]
Observe that $W$ can be extended continuously to $\R^n_{\geq 0}$. 
Using the definition of $g(x)$ and that $\dot{x}_i(t)=f_i(x(t))$, provided that $\prod_{k\in Z} x_k(t) \neq 1$ it holds that
\begin{equation}\label{eq:Wg}
\tfrac{d}{dt} W(x(t)) =     \sum_{i\in Z}  (\log x_i(t))  \, f_i(x(t))  =  \Big(\sum_{k\in Z}  \log x_k(t) \Big) g(x(t))  \, . 
\end{equation}
If a sequence $x(t_j)$ converges to $x^*$, then $ \sum_{k\in Z}  \log x_k(t_j)<0$ for $j$ sufficiently large. Using \eqref{eq:Wg} and \Cref{def:bad}, we  aim to find a sequence of points in the trajectory at which $\tfrac{d}{dt} W$ is positive. This will be obtained by constructing two suitable sequences  $\{x(d_j)\}$ and $\{x(s_j)\}$ such that $d_j<s_j$ and $W(x(d_j)) < W(x(s_j))$ for all $j\in \N$. 

To this end,  choose a sequence $\{\delta_j\}_{j\in \N}$ converging to zero such that 
\[0<\delta_j<\min \{x_i(0), 1 :  i\in Z\}\,. \] 
Consider the box
\[B_j \coloneqq \{x\in \R^n_{>0} : 0<x_i\leq \delta_j,\ i\in Z\}\, \]
and let 
\[ \eta_j \coloneqq    \delta_j  \log \delta_j -\delta_j  <0\, . \]

For a fixed $j$, if $x\in B_j$, then  $x_i \log x_i  - x_i  < 0$ for all $i\in Z$ as $x_i<1$. 
If   additionally   $x_i=\delta_j$ for some $i\in Z$, then 
\begin{equation}\label{eq:WRbound}
W(x) = \delta_j  \log \delta_j -\delta_j   + \sum_{k\in Z\setminus \{i\}}  (x_k \log x_k  - x_k)  \leq   \eta_j <0\,.
\end{equation}

Choose a sequence $\{x(r_j)\}$ converging to $x^*$ with $\{r_j\}$ converging to $\infty$. As $x_i(r_j)$ converges to zero if $i\in Z$ and $\{W(x(r_j))\}$ also converges to zero, 
there exists a subsequence $\{x(s_j)\}$ satisfying
 \begin{equation}\label{eq:choosetj}
 x(s_j)\in B_j^\circ\,, \qquad \text{ and } \qquad W(x(s_j))> \frac{\eta_j}{2}\,. 
\end{equation}

As $x(0) \notin B_j$ by construction and $x(s_j)\in B_j^\circ$, there exists a last time $d_j<s_j$ at which $x(s)$ enters the box $B_j$. Specifically,   for $i\in Z$ let
\[ d_{j,i} \coloneqq  \max\{s\in [0,s_j] : x_i(s)= \delta_j \} \quad \text{and}\quad d_{j} \coloneqq  \max_{i\in Z} \{d_{j,i}\}\,.\]
This construction gives that  
\[ d_j<s_j \,, \quad  x(s)\in B_j^\circ\quad \text{ for all }s\in (d_j,s_j] \,,\quad\text{and}\quad \max_{i\in Z} \, x_i(d_j)=\delta_j\,.\] 
In particular  \eqref{eq:WRbound} gives that $W(x(d_j))\leq \eta_j$. Together with 
 \eqref{eq:choosetj} and using that $\eta_j<0$ we obtain that
\[  W(x(d_j))\leq \eta_j < \frac{\eta_j}{2} <    W(x(s_j))\,. \]
The mean value theorem now gives the existence of   $t_j\in(d_j,s_j)$ such that
\[\tfrac{d}{dt} W(x(t_j))>0\, \qquad j\in \N\,. \]
Since $x(t_j)\in  B_j^\circ$, $\{x_i(t_j)\}$ converges to zero for all $i\in Z$, and \eqref{eq:Wg} gives that $g(x(t_j))<0$. 

Additionally, $\{t_j\}$ tends to $\infty$ because $\{d_j\}$ tends to $\infty$. To see this, let $M>0$. As the trajectory remains in $\R^n_{>0}$ by forward invariance, continuity of the trajectory and compactness give that
\[ \alpha_M \coloneqq \min \{ x_i(t) : i\in Z , 0\leq t\leq M \} >0\, . \]
As $\delta_j$ converges to zero, there exists $j_0$ such that $\delta_j<\alpha_M$ for all $j>j_0$. As $x(d_j)\in B_j$, this implies that $d_j>M$ for all $j>j_0$. Hence $\{d_j\}$ tends to $\infty$.

As the trajectory is bounded,  $\{(x_i(t_{j}))_{i\notin Z}\}$ has a convergent subsequence. 
By passing to a subsequence if necessary, $x(t_j)$ converges to a point $x^{**}$, which  in particular is an $\omega$-limit point. We have by construction that $x_i^{**}=0$ for $i\in Z$, and by the maximality of $Z$,  $x_i^{**}\neq 0$ if $i\notin Z$. 
Finally, as by construction $a(x(t_j))\in \Delta_Z$, which is compact, there exists a subsequence of $\{x(t_j)\}$ such that 
$\{a(x(t_j))\}$ converges to some $a_\infty\in \Delta_Z$. 
This gives the desired negative logarithmic limit $a_\infty$. 
\end{proof}

\Cref{prop:Wprimepos} tells us that negative logarithmic limits exist if a bounded trajectory has a boundary $\omega$-limit point, as there will always be an inclusion-maximal zero support of $\omega$-limit points. The strategy to preclude   boundary $\omega$-limit points is to understand properties of negative logarithmic limits and identify network characteristics under which these properties cannot arise.

\begin{proposition}\label{prop:selectpos}
Consider a reaction network $G=(V,E)$ with $n$ species, $Z\subseteq \{1,\dots,n\}$ nonempty, and  let $x(t)$ be a  positive trajectory of \eqref{eq:mass}. Let $G'=(V',E')$ be a nonempty subnetwork of $G_Z$ consisting of a union of linkage classes and let $P'$ be the convex hull of the complexes in $V'$. 

If $a_\infty\in \Delta_Z$ is a negative logarithmic limit for $(x(t),Z,g_{G'})$, then 
there exists a face $\Gamma$ of $P'$ such that 
\begin{itemize}
\item $\Gamma$ is not  $Z$-transverse for $G'$.
\item $\Gamma=\argmin  \{ a' \cdot \bar{y} : y \in P'\}$ for some $a'\in  \Delta^\circ_Z$.
\item $\Gamma$  is a face of the face $\argmin  \{ a_\infty \cdot \bar{y}: y \in P'\}$ of $P'$. 
 \end{itemize}
\end{proposition}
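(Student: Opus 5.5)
The plan is to pass to a subsequence on which the dominant face of the monomials is constant, and then show that if this face were $Z$-transverse, $g_{G'}$ would eventually be positive. Let $\{t_j\}$ be the sequence from \Cref{def:bad}, and write $x^{(j)}=x(t_j)$, $a_j=a(x^{(j)})$ and $u_j=\prod_{i\in Z}x^{(j)}_i$.

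\emph{Setup.} Since $x^{(j)}_i<1$ for $i\in Z$, every $\log x^{(j)}_i$ is negative, so $a_j\in\Delta^\circ_Z$ and $u_j\to 0$. By \eqref{eq:xR}, every monomial factors as
\[x^{(j)\,y}=u_j^{a_j\cdot\bar y}\prod_{i\notin Z}(x^{(j)}_i)^{y_i}.\]
The limit point has zero support exactly $Z$, so the second factor converges to a positive constant $c_y$. Hence $x^{(j)\,y}\asymp u_j^{a_j\cdot \bar y}$ uniformly in $y\in V'$. It is convenient to set $b_j=-a_j\log u_j=(-\log x_i^{(j)})_{i\in Z}\in\R^Z_{>0}$, so that $u_j^{a_j\cdot\bar y}=e^{-b_j\cdot\bar y}$.

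\emph{Choosing $\Gamma$.} $P'$ has finitely many faces, so I pass to a subsequence on which $\Gamma_j:=\argmin\{a_j\cdot\bar y: y\in P'\}$ is a constant face $\Gamma_0$. I also pass to a further subsequence on which, for every pair $y,z\in V'$, the difference $b_j\cdot(\bar y-\bar z)$ converges in $[-\infty,\infty]$. Upper semicontinuity of $\argmin$ under $a_j\to a_\infty$ shows that $\Gamma_0$ is a face of $\argmin\{a_\infty\cdot\bar y : y\in P'\}$. Since $a_j\in\Delta^\circ_Z$, the second bullet holds with $a'=a_j$. It remains to show that a suitable such face is not $Z$-transverse.

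\emph{Contradiction.} Suppose $\Gamma_0$ is $Z$-transverse, and let $m_j=\min_{y\in V'}a_j\cdot\bar y$.
\begin{itemize}
\item Reactions with reactant $y\in\Gamma_0$ have coefficient $a_j\cdot(\bar y'-\bar y)\ge 0$, since $y'\in P'$.
\item A transverse reaction $y\to y'$ with $y\in\Gamma_0$, $y'\notin\Gamma_0$ contributes at least $\kappa c_y\,\varepsilon_j\,u_j^{m_j}$, where $\varepsilon_j=a_j\cdot(\bar y'-\bar y)>0$.
\item Reactions with reactant outside $\Gamma_0$ have bounded coefficients, because $a_j\in\Delta_Z$. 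They contribute at most $C u_j^{m_j+\gamma_j}$, where $\gamma_j$ is the gap to the next level.
\end{itemize}
The aim is to conclude $g_{G'}(x^{(j)})>0$ for large $j$, which contradicts the definition of a negative logarithmic limit. If $\varepsilon_j$ and $\gamma_j$ stay bounded away from $0$, this is immediate, since $u_j^{\gamma_j}\to 0$.

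\emph{Main obstacle.} The hard case is when these gaps shrink. The gap terms are of the form $\varepsilon_j$ against $u_j^{\gamma_j}=e^{-\gamma_j|\log u_j|}$, which need not be comparable. To handle this, I would not use the argmin face $\Gamma_0$ itself. Instead I would use the set $\Gamma$ of complexes $y$ for which $b_j\cdot(\bar y-\bar y_0)$ stays bounded, with $y_0\in\Gamma_0$.
\begin{itemize}
\item Multiply $g_{G'}(x^{(j)})$ by $|\log u_j|\,e^{b_j\cdot\bar y_0}$.
\item A transverse reaction leaving $\Gamma$ then contributes $c\,b_j\cdot(\bar y'-\bar y)\to+\infty$.
\item Reactions inside $\Gamma$ contribute bounded amounts.
\item Reactions with reactant outside $\Gamma$ contribute $O(|\log u_j|e^{-D_j})$ with $D_j\to\infty$.
\end{itemize}
This last bound is still not obviously dominated. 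I would therefore iterate the construction level by level along the finitely many distinct growth rates of $b_j\cdot\bar y$, yielding a chain of sets. At the first level where no reaction leaves the current set, that set is non-transverse; at each earlier level a transverse reaction supplies a positive term that dominates all deeper terms. Finally, I would check that each level set is a face of $P'$ cut out by some $a'\in\Delta^\circ_Z$, obtained as a limit of suitably rescaled differences $b_j-\lambda_j b'_j$, and that it lies inside the $a_\infty$-face. I expect this bookkeeping of the asymptotic scales to be the main technical step.
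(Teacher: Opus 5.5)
There is a genuine gap, at exactly the step you flag as the main obstacle. Your set $\Gamma$ (complexes $y$ with $b_j\cdot(\bar y-\bar y_0)$ bounded) is the right one; it coincides with the paper's support of the limit of the normalized monomials. Your rescaling by $|\log u_j|e^{b_j\cdot\bar y_0}$ is also right. The problem is the terms with reactant outside $\Gamma$. Bounding them in absolute value by $O(|\log u_j|e^{-D_j})$ is too crude, and the level-by-level iteration does not repair this. The claim that a transverse reaction at some level ``dominates all deeper terms'' is precisely the estimate you could not prove. Moreover, if it held at the first level it would already give the contradiction, so the chain of sets plays no role.

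What is missing is a one-sided bound. Put $D_j(y)\coloneqq b_j\cdot(\bar y-\bar y_0)$ and $\beta_{j,y}\coloneqq\prod_{i\notin Z}x_i(t_j)^{y_i}$. Because $y_0$ minimizes $a_j\cdot\bar y$ over $P'$ along your subsequence, $D_j(y)\geq 0$ for \emph{all} $y\in V'$, including complexes that occur only as products. The rescaled contribution of $y\to y'$ therefore satisfies
\[ \kappa_{y\to y'}\beta_{j,y}\bigl(D_j(y')-D_j(y)\bigr)e^{-D_j(y)}\;\geq\;-\kappa_{y\to y'}\beta_{j,y}\,D_j(y)\,e^{-D_j(y)}, \]
and the right-hand side tends to $0$ when $y\notin\Gamma$, since $D_j(y)\to\infty$. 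So these terms are bounded below. A single reaction leaving $\Gamma$ then forces the rescaled $g_{G'}$ to $+\infty$, contradicting $g_{G'}(x(t_j))<0$. This is the paper's inequality $z_{j,y}\log(z_{j,y}/z_{j,y'})\geq z_{j,y}\log z_{j,y}$. With it, $\Gamma$ itself is non-transverse and no iteration is needed.

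The second bullet, $a'\in\Delta^\circ_Z$, is also not secured. Your $\Gamma$ can contain complexes outside the $a_j$-face $\Gamma_0$, namely when $a_j\cdot(\bar y-\bar y_0)>0$ is of order $1/|\log u_j|$, so $a'=a_j$ fails. Obtaining $a'$ as a \emph{limit} of rescaled vectors is unreliable, because limits lose both strict positivity of the entries and the strict inequality $a'\cdot\bar y>a'\cdot\bar y_0$ off $\Gamma$. The paper instead works at one large index. Let $U\coloneqq\langle\bar y-\bar y_0 : y\in\Gamma\rangle$ and write $b_j=\mu_j+\nu_j$ with $\mu_j\in U$ and $\nu_j\in U^\perp$.
\begin{itemize}
\item $\mu_j$ is bounded, since $b_j\cdot(\bar y-\bar y_0)$ is bounded on $\Gamma$.
\item Every coordinate $b_{j,i}=-\log x_i(t_j)$ tends to $\infty$, so $\nu_j\in\R^Z_{>0}$ for large $j$.
\item $\nu_j\cdot(\bar y-\bar y_0)=0$ on $\Gamma$ and $\nu_j\cdot(\bar y-\bar y_0)=D_j(y)-O(1)\to\infty$ off $\Gamma$.
\end{itemize}
Fixing one large $j$, $a'\coloneqq\nu_j/\sum_i\nu_{j,i}$ cuts out $\conv(\Gamma)$ exactly. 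Containment in the $a_\infty$-face then follows from $a_j\cdot(\bar y-\bar y_0)=D_j(y)/|\log u_j|\to 0$ for $y\in\Gamma$.
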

\begin{proof}
Let $\{t_j\}$ be a sequence witnessing \Cref{def:bad} for $a_\infty$. 
Let $u_j\coloneqq \prod_{i\in Z}  x_i(t_j) >0$ and observe that $\{u_j\}$ converges to zero because $\{x_i(t_j)\}$ converges to zero for all $i\in Z$. Also, we have that   $a_j\coloneqq a(x(t_j))\in \Delta^\circ_Z$ and $x_i(t_j) =   u_j^{a_{ji}}$ for $i\in Z$ by \eqref{eq:xR}.
Let 
\[ \Gamma_0 \coloneqq  \argmin  \{ a_\infty \cdot \bar{y} : y \in P'\} \,. \]

For each $j\in \N$, let  $\beta_{j,y}  \coloneqq  \prod_{i\notin Z} x_{i}(t_j)^{y_i}$, consider the sum $\widetilde{u}_j\coloneqq  \sum_{y\in V'}u_j^{a_j\cdot \bar{y}}$ and let 
\begin{align*}
 \tilde{g}_j &\coloneqq  \frac{g_{G'}(x(t_j))}{\widetilde{u}_j}
 =   \sum_{y\rightarrow y'\in E'}    (a_j\cdot (\bar{y}' -   \bar{y})) \k_{y\rightarrow y'} \beta_{j,y}  \frac{u_j^{a_j\cdot \bar{y}}}{\widetilde{u}_j}\,.
 \end{align*} 
 The signs of  $\tilde{g}_j$ and of $g_{G'}(x(t_j)) $ agree and  by the choice of $t_j$ it follows that $\tilde{g}_j<0$  (cf. \Cref{def:bad}). 
 
Consider the vector of quotients 
\begin{equation}\label{eq:z}
z_j\coloneqq  \left(\frac{u_j^{a_j\cdot \bar{y}}}{\widetilde{u}_j}\right)_{y\in V'}\,,
\end{equation}
which we   view in  $\R^{V'}$. Then
\[ \tilde{g}_j =  \sum_{y\in V'}  \Big(\sum_{y\rightarrow y'\in E'}    (a_j\cdot (\bar{y}' -   \bar{y})) \k_{y\rightarrow y'} \Big) \beta_{j,y} \,z_{j,y} \,. \]
Each $z_j$ belongs to the relative interior of the standard simplex in $\R^{V'}$, and hence by compactness of the simplex, by passing to a subsequence, we can assume that $\{z_j\}$ converges to some $z_\infty$ in the simplex. Let 
\[ \Gamma_{V'} \coloneqq \{y\in V' : z_{\infty,y}>0\}\]
 and let us inspect the term $z_{\infty,y_0}$ for $y_0\in V'$ more closely. We have that 
\begin{equation}\label{eq:zy}
 z_{j,y_0}= \frac{u_j^{a_j\cdot \bar{y}_0}}{\sum_{y\in V'} u_j^{a_j\cdot \bar{y}}} 
=  \frac{1}{ 1+ \sum_{y\in V', y\neq y_0} u_j^{a_j\cdot (\bar{y}-\bar{y}_0)}}\, . 
\end{equation}

If $y_0\notin \Gamma_0$, then by definition of $\Gamma_0$ and $P'$,  $\eta\coloneqq a_\infty\cdot  (\bar{y}-\bar{y}_0)<0$ for some $y\in \Gamma_0\cap V'$. 
Since $a_j$ converges to $a_\infty$ (by \Cref{def:bad}) we have
\[ a_j\cdot  (\bar{y}-\bar{y}_0)< \eta/2<0 \quad \text{for $j$ sufficiently large, and hence } \quad
u_j^{a_j\cdot (\bar{y} -   \bar{y}_0)} \geq u_j^{\eta/2}\,.\]
We conclude from this that
$u_j^{a_j\cdot (\bar{y} -   \bar{y}_0)}$
 tends to infinity as $u_j$ tends to zero. This implies that there is at least  one term in the denominator of \eqref{eq:zy}
 that tends to infinity. As all terms in the denominator are positive, it follows that $z_{\infty,y_0}=0$ and hence  $y_0\notin \Gamma_{V'}$. In particular
\begin{equation}\label{eq:face_inclusion}
\Gamma_{V'}\subseteq \Gamma_0 \,.
\end{equation}

Observe  now that 
\begin{equation}\label{eq:au}
a_j\cdot(\bar y'-\bar y) = \frac{-1}{\log u_j} \log\frac{z_{j,y}}{z_{j,y'}}\,,
\end{equation}
for all $y,y'\in V'$, and hence we have
\begin{equation}\label{eq:gj_rewrite}
(- \log u_j)\, \tilde{g}_j =\sum_{y\in V'}   \sum_{y\rightarrow y'\in E'}  
 \k_{y\rightarrow y'} \beta_{j,y} \, \Big(z_{j,y}  \log\frac{z_{j,y}}{z_{j,y'}}\Big)\,. 
 \end{equation}
Note that this expression has the same sign as $\tilde{g}_j$  and that   the coefficients $ \k_{y\rightarrow y'} \beta_{j,y}$ are bounded away from $0$ and $\infty$ (as every $x_i(t_j)$, for $i\notin Z$, converges to a positive number by \Cref{def:bad}). 
 
For a reaction $y\rightarrow y'\in E'$, we look at the term it contributes  in \eqref{eq:gj_rewrite}: 
 \begin{itemize}
 \item 
 If $y\in \Gamma_{V'}$ and $y'\notin\Gamma_{V'}$, 
 then
\[ z_{j,y}\ \text{ tends to } z_{\infty,y}>0 \qquad \text{and}\qquad z_{j,y'}\ \text{ tends to } 0\, ,\]
so
\[ z_{j,y}\log\frac{z_{j,y}}{z_{j,y'}} \text{ tends to } \infty\, .\]
Thus this term tends to $\infty$.
\item If $y,y'\in\Gamma_{V'}$, then  $z_{j,y}\log\frac{z_{j,y}}{z_{j,y'}}$ has a finite limit and hence the sequence stays bounded.

\item 
If $y\notin\Gamma_{V'}$, then  $z_{j,y}>0$ converges to zero and in particular so does $z_{j,y} \log z_{j,y} $. For every $\varepsilon>0$ there exists $j_0$ such that $-\varepsilon<z_{j,y} \log z_{j,y} <0$ for all $j>j_0$. 
Hence, as $z_{j,y'}\leq 1$, we have 
\[ -\varepsilon < z_{j,y} \log z_{j,y}  \leq z_{j,y}\log\frac{z_{j,y}}{z_{j,y'}} \,,\] 
and  the summand is bounded from below  for all sufficiently large $j$. 
\end{itemize}

We conclude that the existence of a reaction from a complex in $\Gamma_{V'}$ to a complex not in $\Gamma_{V'}$ forces $- \log(u_j)\, \tilde{g}_j$ in  \eqref{eq:gj_rewrite}   to tend to $\infty$ and hence  become positive for all $j$ large enough. Therefore the fact that $\tilde{g}_j<0$  for all $j$ sufficiently large implies that there is no such reaction.

\medskip
All that is left is to show that 
\[ \Gamma_{V'}  = \argmin \{ a'\cdot \bar{y}: y\in V'\}\]
for some $a'\in \Delta^\circ_Z$. From this we will obtain that $\Gamma\coloneqq \conv (\Gamma_{V'})$  is the face of $P'$ in the statement: it is not  $Z$-transverse since $\Gamma\cap V'=\Gamma_{V'}$ and \eqref{eq:face_inclusion} gives that $\Gamma$ is a face of $\Gamma_0$.

 To this end, choose $y_0\in\Gamma_{V'}$ and consider the vector subspace
\[ U \coloneqq   \langle\bar{y}-\bar{y}_0 : y\in \Gamma_{V'} \rangle  \subseteq \R^Z \,.\]
Let
\[ w_j \coloneqq  -(\log u_j) a_j  \in \R^Z_{> 0}\,\]
so that by \eqref{eq:au} we have
\[ w_j \cdot   (\bar y-\bar y_0)= \log\frac{z_{j,y_0}}{z_{j,y}} \, . \] 
Using the earlier arguments, we observe that: 
\begin{enumerate}[label=(\roman*)]
\item If $\mu \in U$,    then
$w_j \cdot \mu$ remains bounded as $j$ tends to $\infty$, as $w_j \cdot   (\bar y-\bar y_0)$ remains   bounded when $y\in \Gamma_{V'}$. 
\item For all $i\in Z$, $w_{ji} =-\log  x_i(t_j)$ tends to $\infty$ as $x_i(t_j)$ converges to zero.  
\item $w_j \cdot   (\bar y-\bar y_0)$  tends to $\infty$ if $y\in V'\setminus \Gamma_{V'}$. 
 \end{enumerate}

Choose  an orthonormal basis $\{\mu_1,\dots,\mu_{s}\}$ of $U$ and  consider   
\[\mu(j)= (w_j\cdot \mu_1) \mu_1+ \dots + (w_j\cdot \mu_s) \mu_s \in U \,, \quad \text{so that } w_j = \mu(j)+\nu(j)\ \text{ with }\nu(j)\in U^\perp  \, . \]
For $y\in \Gamma_{V'}$, as $\bar y-\bar y_0 \in U$, we have that $\nu(j)\cdot  (\bar y-\bar y_0) =0$ for all $j$.

By point (i), $w_j\cdot \mu_l$ remains bounded for all $l=1,\dots,s$ and hence  each entry of $\mu(j)$ remains bounded.
By point (ii) it then follows that all entries of $\nu(j)$  tend to $\infty$. 
 In particular, there exists $j_0$ such that  $\nu(j)\in \R^Z_{>0}$ for all $j>j_0$.

Similarly, for $y\in V'\setminus \Gamma_{V'}$, as $\mu(j)\cdot  (\bar y-\bar y_0)$ remains bounded, point (iii) implies that 
$\nu(j) \cdot   (\bar y-\bar y_0)$ tends to $\infty$. 
Using now that  the number of complexes in $V'$ is finite, there exists $j_1>j_0$ such that  $\nu(j) \cdot (\bar y-\bar y_0)>0$ for all $y\in V'\setminus \Gamma_{V'}$ and $j>j_1$. 

For any such $j$, by defining 
\[ a'\coloneqq \frac{\nu(j)}{\sum_{i\in Z} \nu(j)_i} \in\Delta_Z^\circ \quad \text{we have}\quad  
 a'\cdot\bar y \  \begin{cases}  =  a'\cdot\bar y_0 & \text{if } y\in\Gamma_{V'}  \\
>a'\cdot\bar y_0  & \text{if }  y\in V'\setminus \Gamma_{V'} \,. 
\end{cases}
\]
As $\Gamma$ is the convex hull of $\Gamma_{V'}$,  $a'\in \Delta_Z^\circ$ determines $\Gamma$ as desired.
\end{proof}

\begin{remark}
A key point in the proof of \Cref{prop:selectpos} is the normalization of the vector of monomials giving rise to $z_j$ in \eqref{eq:z}. With the notation in the proof, the fact that the convex hull of $\Gamma_{V'}$ is a face of $P'$ could have been directly derived from an argument of positive toric geometry, but this approach did not automatically give that the face was determined by a vector with positive entries. An analogous normalization is used in the proof by  \cite{BorosHofbauer2020} of the permanence conjecture for weakly reversible networks with one linkage class. The writing of $g$ in terms of  $z_{j,y}\log\frac{z_{j,y}}{z_{j,y'}}$, which allows the study of convergence, appears in previous works such as \cite[Lemma 3.2]{manoj:cycles} and  \cite{anderson:cappelletti:tier}. 
\end{remark}

By taking $G'=G_Z$, \Cref{prop:selectpos} tells us a geometric property that negative logarithmic limits must satisfy. 
We now derive an additional property, which builds on the positive tropicalization of a vector subspace, defined as follows (see \cite{SpeyerWilliams2005,ArdilaKlivansWilliams2006,RoseTelek24}). 
Let $U\subseteq \R^\ell$ be a vector subspace. 
A pair of disjoint index subsets $C^+,C^-\subseteq \{1,\dots,\ell\}$ is said to be a \term{signed circuit} of $U^\perp$ if there exists a nonzero vector $v\in U^\perp$   whose support is inclusion-minimal among nonzero vectors of $U^\perp$, and such that $C^+ = \{i : v_i>0\}$ and  $C^- = \{i : v_i<0\}$.

The \term{positive tropicalization} of $U$ is then
\begin{equation}\label{eq:Bergman}
\begin{aligned}
  \Trop^+(U) & \coloneqq \{ a \in \mathbb{R}^{\ell} : 
 \text{for every signed circuit } (C^+,C^-)  \text{ of $U^\perp$ it holds that} \\ & \hspace{3cm} \textstyle\min_{k \in C^+\cup C^-} \, a_k = a_i = a_j   \ \text{ for some }i \in C^+ ,\  j \, \in C^- \}\,.
 \end{aligned}
\end{equation}
Here we use the min convention for the tropicalization. We note that if $U=\R^\ell$, then $\Trop^+(U)=\R^\ell$.

\begin{proposition}\label{prop:bergman}
Consider a reaction network $G=(V,E)$ with $n$ species, $Z\subseteq \{1,\dots,n\}$ nonempty, and  let $x(t)$ be a  positive trajectory of \eqref{eq:mass}. If $a_\infty\in \Delta_Z$ is a negative logarithmic limit for $(x(t),Z,g)$, then 
\[ a_\infty \in  \Trop^+(S_Z) \,.\]
\end{proposition}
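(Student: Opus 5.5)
The plan is to use only conservation laws; the sign condition $g(x(t_j))<0$ in \Cref{def:bad} should not be needed. I will show that every signed circuit of $S_Z^\perp$ gives a linear relation among the $x_i(t)$, $i\in Z$, that holds along the whole trajectory. Evaluating this relation at the times $t_j$ will force the tropical balancing condition in \eqref{eq:Bergman}. If $S_Z^\perp=\{0\}$ there are no signed circuits and the condition is vacuous, so assume $S_Z^\perp\neq\{0\}$.

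\textbf{Step 1 (lifting).} Let $(C^+,C^-)$ be a signed circuit of $S_Z^\perp$, witnessed by $v\in S_Z^\perp\subseteq\R^Z$. Define $w\in\R^n$ by $w_i=v_i$ for $i\in Z$ and $w_i=0$ for $i\notin Z$. For every reaction $y\rightarrow y'\in E$,
\[ w\cdot(y'-y)=v\cdot(\bar y'-\bar y)=0, \]
since $\bar y'-\bar y\in S_Z$. Hence $w\in S^\perp$, and $\sum_{i\in Z}v_ix_i(t)$ is constant in $t$. Take a sequence $\{t_j\}$ witnessing \Cref{def:bad}. Since $x_i(t_j)\to 0$ for all $i\in Z$, the constant equals $0$, so
\[ \sum_{i\in C^+}v_i\,x_i(t)=\sum_{k\in C^-}|v_k|\,x_k(t)\qquad\text{for all }t\geq 0. \]
Both sums involve positive numbers, because $x(t)\in\R^n_{>0}$. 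It follows that $C^+$ and $C^-$ are both nonempty.

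\textbf{Step 2 (substituting the logarithmic coordinates).} Set $u_j=\prod_{i\in Z}x_i(t_j)\in(0,1)$, so $u_j\to 0$, and set $a_j=a(x(t_j))$, so $a_j\to a_\infty$. By \eqref{eq:xR}, the identity at $t=t_j$ reads
\[ \sum_{i\in C^+}v_i\,u_j^{a_{ji}}=\sum_{k\in C^-}|v_k|\,u_j^{a_{jk}}. \]
Let $m=\min_{k\in C^+\cup C^-}a_{\infty,k}$. Suppose, for contradiction, that $m$ is attained in $C^+$ but not in $C^-$; the symmetric case is identical. Then there is $\varepsilon>0$ with $a_{\infty,k}>m+2\varepsilon$ for all $k\in C^-$. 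Fix $i_0\in C^+$ with $a_{\infty,i_0}=m$. For $j$ large we have $a_{j i_0}<m+\varepsilon/2$ and $a_{jk}>m+3\varepsilon/2$ for all $k\in C^-$. Since $u_j<1$, the left side is at least $v_{i_0}u_j^{m+\varepsilon/2}$. The right side is at most $\big(\sum_{k\in C^-}|v_k|\big)u_j^{m+3\varepsilon/2}$. Dividing gives
\[ v_{i_0}\le \Big(\sum_{k\in C^-}|v_k|\Big)\,u_j^{\varepsilon}\longrightarrow 0, \]
which contradicts $v_{i_0}>0$. Therefore the minimum over $C^+\cup C^-$ is attained at some $i\in C^+$ and at some $j\in C^-$. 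Since the circuit was arbitrary, $a_\infty\in\Trop^+(S_Z)$.

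\textbf{Main obstacle.} The only delicate point is Step 1. One must check that a vector of $S_Z^\perp$, extended by zeros, is a genuine conservation law of the full network $G$, not only of $G_Z$. This holds because $S_Z$ is the projection of $S$ onto the coordinates in $Z$, so the computation $w\cdot(y'-y)=v\cdot(\bar y'-\bar y)$ covers every reaction of $E$. The remaining work is uniform estimation of exponents, which is routine because $u_j\to 0$ and $a_j\to a_\infty$.
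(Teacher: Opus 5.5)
Your proof is correct and follows essentially the same route as the paper. Both arguments derive $v\cdot\overline{x(t_j)}=0$ for $v\in S_Z^\perp$ from conservation: you extend $v$ by zeros to a vector of $S^\perp$, while the paper uses $x(t_j)-x^*\in S$ and projects onto $Z$. Both then substitute $x_i(t_j)=u_j^{a_{ji}}$ and show that the minimum of $a_\infty$ over the circuit's support is attained in both $C^+$ and $C^-$. The only real difference is the last step: you use a direct exponent estimate, whereas the paper normalizes by $\widetilde u_j$ and passes to a limit in the simplex; like the paper, you never use the sign of $g$.
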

\begin{proof}
Let $\{t_j\}$ be  a sequence witnessing \Cref{def:bad} for $a_\infty$. Let $x^*\in \R^n_{\geq 0}$ be the limit of $\{x(t_j)\}$, which by hypothesis has zero support $Z$. All points $x(t_j)$ belong to the same stoichiometric compatibility class, which is a closed set, and hence so does $x^*$. It follows that $x(t_j)-x^* \in S$ for all $j$. Projecting onto the coordinates in $Z$, we obtain that 
\[ \overline{x(t_j)} \in S_Z \subseteq \R^Z\, . \]

Let $u_j\coloneqq \prod_{i\in Z}  x_i(t_j)<1$, which converges to zero because $x_i(t_j)$ converges to zero. By construction $a_j \coloneqq a(x(t_j))\in \Delta_Z^\circ$ and using \eqref{eq:xR} we have  $x_i(t_j) =   u_j^{a_{ji}}$ for $i\in Z$. 

Let $v\in S_Z^\perp$ be nonzero and have inclusion-minimal support among all nonzero vectors of $S_Z^\perp$. 
We now proceed similarly to the proof of \Cref{prop:selectpos}. 
As $0 = v\cdot \overline{x(t_j)}$, after dividing by $\widetilde{u}_j\coloneqq \sum_{k\in Z: v_k\neq 0} u_j^{a_{jk}}>0$, we obtain
\begin{align*}
0 & =  \sum_{i\in Z}  v_i\,  \frac{u_j^{a_{ji}}}{\widetilde{u}_j} \,\quad   \text{ for all $j$}\,.  
\end{align*}
For every $j$, the vector
$z_j\in \R^Z$ such that $z_{ji}=0$ if $v_i=0$, and $u_j^{a_{ji}}/\widetilde{u}_j$ otherwise, 
 belongs to $\Delta_Z$ and satisfies $0=v\cdot z_j$. 
As $\Delta_Z$ is compact, by passing to a subsequence if necessary, $\{z_j\}$ has a limit $z_\infty \in \Delta_Z$ for $j$ tending to $\infty$, which also satisfies 
$0 =    v \cdot  z_{\infty}.$ 

Let $m_{\infty} \coloneqq \min \{a_{\infty,i} : i\in Z, v_i\neq 0\}$. If $a_{\infty,i}>m_{\infty}$ and $v_i\neq 0$ for $i\in Z$, then 
\[ \frac{u_j^{a_{ji}}}{\widetilde{u}_j}  = \frac{1}{ 1+ \sum_{k\neq i, v_k\neq 0} u_j^{a_{jk}-a_{ji}}} \] 
tends to zero: by choosing $k$ such that $a_{\infty, k}=m_{\infty}$ and $v_k\neq 0$, we have that  $a_{\infty,k}-a_{\infty, i}<0$, which implies that $u_j^{a_{jk}-a_{ji}}$ tends to $\infty$ and hence $z_{\infty,i}=0$. 

Hence, $\{i\in Z : z_{\infty,i}>0\} \subseteq \{i\in Z :a_{\infty,i}=m_{\infty},v_i\neq 0\}$. In particular, 
\[ 0 =  \sum_{i\in Z:\, a_{\infty,i}=m_{\infty}} v_i \, z_{\infty,i}\,. \]
As $z_{\infty}$ is nonnegative and nonzero, it follows that there must exist indices $i,k\in Z$ such that  $z_{\infty,i}, z_{\infty,k}\neq 0$ and $v_i>0,v_k<0$. 
In particular 
$m_{\infty}= \min_{s : v_s\neq 0} \, a_{\infty,s} = a_{\infty,i}  = a_{\infty,k} $ with $i \in \{s: v_s>0\}$ and $k \, \in  \{s: v_s<0\}$. 
Since this holds for all nonzero vectors $v$ with minimal support, this holds for all signed circuits and hence $a_\infty\in \Trop^+(S_Z)$.
\end{proof}

\begin{remark}
\Cref{prop:bergman} admits an alternative, arguably more direct, proof using the description of $\Trop^+(U)$ as a limit of logarithms of the set $U\cap \R^n_{>0}$, see \cite[Cor. 4.6]{alessandrini}, \cite[Prop. 4.3]{RoseTelek24}. We choose the approach using the signed circuits due to its geometric character and suitability for computations.
\end{remark}

We obtain the following local statement on logarithmic limits of trajectories that approach the boundary of $\R^n_{\geq 0}$. 

\begin{corollary}\label{cor:gooda}  Consider a reaction network $G$ with $n$ species. Let $Z\subseteq \{1,\dots,n\}$ be nonempty, $g$  the associated function from \eqref{eq:ga}, and $x(t)$ a  positive trajectory of \eqref{eq:mass}. 
 
If $a_\infty\in \Delta_Z$ is a negative logarithmic limit for $(x(t),Z,g)$, then  $G_Z\neq \varnothing$ and there exists  a face $\Gamma$ of $P_Z$ that is not  $Z$-transverse for $G_Z$, and such that 
\[ a_\infty \in   N_Z(\Gamma)  \cap \Trop^+(S_Z)\, \quad\text{and}\quad N_Z^*(\Gamma)\cap \R^{Z}_{> 0} \neq \varnothing \,.\]
\end{corollary}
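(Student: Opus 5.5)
The corollary should follow by combining \Cref{prop:selectpos} with $G'=G_Z$ and \Cref{prop:bergman}. The only extra points are checking that $G_Z$ is nonempty and translating the conclusions of \Cref{prop:selectpos} into the normal-cone language.

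\textbf{Nonemptiness of $G_Z$.} Let $\{t_j\}$ witness \Cref{def:bad} for $a_\infty$, so $g(x(t_j))<0$ for all $j$. By \eqref{eq:ga}, $g$ is a sum over the reactions of $E_Z$. If $G_Z$ were empty, this sum would be empty, so $g\equiv 0$, which contradicts $g(x(t_j))<0$. Hence $G_Z\neq\varnothing$. Since $g=g_{G_Z}$, the point $a_\infty$ is also a negative logarithmic limit for $(x(t),Z,g_{G_Z})$.

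\textbf{Applying \Cref{prop:selectpos}.} Take $G'=G_Z$, which is a nonempty union of linkage classes of $G_Z$, so that $P'=P_Z$. \Cref{prop:selectpos} gives a face $\Gamma$ of $P_Z$ with three properties:
\begin{enumerate}[label=(\alph*)]
\item $\Gamma$ is not $Z$-transverse for $G_Z$;
\item $\Gamma=\argmin\{a'\cdot\bar y: y\in P_Z\}$ for some $a'\in\Delta_Z^\circ$;
\item $\Gamma\subseteq\Gamma_0\coloneqq\argmin\{a_\infty\cdot\bar y: y\in P_Z\}$.
\end{enumerate}
Property (b) says exactly that $a'\in N_Z^*(\Gamma)$. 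Since $\Delta_Z^\circ\subseteq\R^Z_{>0}$, this gives $N_Z^*(\Gamma)\cap\R^Z_{>0}\neq\varnothing$. Property (c) says that $\Gamma\subseteq\argmin\{a_\infty\cdot\bar y : y\in P_Z\}$, which is precisely the definition of $a_\infty\in N_Z(\Gamma)$.

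\textbf{Applying \Cref{prop:bergman}.} This proposition applies verbatim to $(x(t),Z,g)$ and gives $a_\infty\in\Trop^+(S_Z)$. Together with the previous step, $a_\infty\in N_Z(\Gamma)\cap\Trop^+(S_Z)$.

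I expect no real obstacle. The only care needed is bookkeeping: checking that $g$ coincides with $g_{G_Z}$ (the sum over all linkage classes of $G_Z$), and that "face of the face $\Gamma_0$" entails the inclusion $\Gamma\subseteq\Gamma_0$ used in the definition of $N_Z$.
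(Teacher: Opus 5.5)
Your proposal is correct and follows essentially the same route as the paper. You get nonemptiness of $G_Z$ from $g\equiv 0$, apply \Cref{prop:selectpos} with $G'=G_Z$ for the face $\Gamma$ and the two normal-cone conditions, and use \Cref{prop:bergman} for $a_\infty\in\Trop^+(S_Z)$.
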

\begin{proof} 
By \Cref{prop:bergman}, $a_\infty \in  \Trop^+(S_Z)$.  
If $G_Z=\varnothing$, then $g$ is identically zero and $a_\infty$ cannot be a negative logarithmic limit. 
Now apply \Cref{prop:selectpos} to $G'=G_Z$ to obtain the face $\Gamma$  that is not  $Z$-transverse for $G_Z$ and that is determined by a positive vector, hence $N_Z^*(\Gamma)\cap \R^{Z}_{> 0} \neq \varnothing$. As $\Gamma$ is a face of $\argmin \{a_\infty\cdot \bar y : y \in P_Z\}$ by \Cref{prop:selectpos}, 
it follows  that $a_\infty\in N_Z(\Gamma)$. 
\end{proof}

\section{Positively transverse networks}\label{sec:positive}

The results of \Cref{sec:technical} allow us to identify a class of networks for which $\omega$-limit sets of bounded trajectories cannot have boundary points, namely those for which negative logarithmic limits cannot exist.

A nonempty subset $Z\subseteq\{1,\dots,n\}$ is said to be a \term{siphon} of a reaction network $G=(V,E)$ if for all $i\in Z$ and $y\rightarrow y'\in E$ such that $y_i'\neq 0$, there exists $j\in Z$ such that $y_j\neq 0$ (see e.g. \cite{AngeliDeLeenheerSontag2007,ShiuSturmfels}). In  words, if $X_i$ for $i\in Z$ is in the product of a reaction, then the reactant of the reaction includes some species with index in $Z$ as well. 

A  well-known property (see e.g \cite{AngeliDeLeenheerSontag2007,Gopalkrishnan2011Catalysis,DeshpandeGopalkrishnan2014,MarcondesFeliuWiuf2017}), which we exploit below, is that if  the zero support of a point $x^*\in \R^n_{\geq 0}$  contains the support of a nonnegative conservation vector $w$, then $(x^*+S)\cap \R^n_{>0}=\varnothing$. 
 This is easy to see, as any point $z\coloneqq x^*+v \in x^*+S$ satisfies $w \cdot (x^* + v) = w \cdot x^* = 0$. If $z\in \R^n_{\geq 0}$, then $w \cdot z=0$ 
implies $z_i=0$ whenever $w_i\neq 0$. Hence $x^*+S$ does not intersect $\R^n_{>0}$. 
This fact  has been used to identify zero supports of points in stoichiometric compatibility classes intersecting $\R^n_{>0}$, see e.g. \cite{ShiuSturmfels,AngeliDeLeenheerSontag2007,JohnstonSiegel}.

\begin{definition}\label{def:siphon}
Consider a reaction network with $n$ species and a siphon $Z\subseteq \{1,\dots,n\}$. Then $Z$  is said to be:
\begin{itemize}
\item  A \term{relevant siphon} if  $Z$ does not contain the support of a   nonnegative conservation vector.
\item A \term{positively transverse siphon}  if for every face $\Gamma$ of $P_Z$, at least one of the following holds:
\begin{itemize}
\item $N_Z^*(\Gamma)\cap \R^{Z}_{> 0} = \varnothing$.
\item $\Gamma$ is  $Z$-transverse for $G_Z$.
\item $N_Z(\Gamma)\cap \Trop^+(S_Z) \cap \Delta_Z = \varnothing$.
\end{itemize}
\end{itemize}
\end{definition}

\begin{definition}\label{def:positive}
A reaction network is said to be \term{positively transverse} if  every relevant siphon is positively transverse. 
\end{definition}

\begin{proposition}\label{prop:main}
Consider a reaction network and let $x(t)$ be a bounded positive trajectory  of \eqref{eq:mass}.  If $x(t)$ has a  boundary $\omega$-limit point whose zero support $Z$ is a positively transverse siphon, then there exists $Z\subsetneq Z'$ such that $Z'$ is the zero support of an $\omega$-limit point of $x(t)$.
\end{proposition}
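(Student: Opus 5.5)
The argument is by contradiction. Let $Z$ be the zero support of a boundary $\omega$-limit point of $x(t)$, and assume $Z$ is a positively transverse siphon. Suppose no $\omega$-limit point of $x(t)$ has zero support strictly containing $Z$. Then $Z$ is inclusion-maximal among the zero supports of points in $\omega(x(t))$: any zero support containing $Z$ must equal $Z$. Note that \Cref{prop:Wprimepos} only needs maximality in this sense, namely that the limit point $x^{**}$ built in its proof has zero support exactly $Z$. That holds because $x^{**}_i=0$ for $i\in Z$ and no strictly larger zero support occurs. So \Cref{prop:Wprimepos} applies and gives a negative logarithmic limit $a_\infty\in\Delta_Z$ for $(x(t),Z,g)$.

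Next, apply \Cref{cor:gooda}. It gives $G_Z\neq\varnothing$ and a face $\Gamma$ of $P_Z$ with three properties:
\begin{itemize}
\item $\Gamma$ is not $Z$-transverse for $G_Z$;
\item $N_Z^*(\Gamma)\cap\R^Z_{>0}\neq\varnothing$;
\item $a_\infty\in N_Z(\Gamma)\cap\Trop^+(S_Z)$.
\end{itemize}
Since $a_\infty\in\Delta_Z$, the third property gives $N_Z(\Gamma)\cap\Trop^+(S_Z)\cap\Delta_Z\neq\varnothing$. So $\Gamma$ fails all three alternatives in \Cref{def:siphon}, contradicting that $Z$ is a positively transverse siphon. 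Hence some $\omega$-limit point has zero support $Z'\supsetneq Z$.

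The siphon property itself is not used here beyond what the definitions require; it matters for the main theorem. The step needing most care is the maximality hypothesis of \Cref{prop:Wprimepos}. I read "inclusion-maximal" as "no strictly larger zero support occurs", which is exactly the negation of the conclusion. With that reading the proposition is an immediate combination of \Cref{prop:Wprimepos} and \Cref{cor:gooda}, with no further estimates.
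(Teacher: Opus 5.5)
Your proof is correct and is essentially the paper's argument: assuming no strictly larger zero support occurs makes $Z$ inclusion-maximal, so \Cref{prop:Wprimepos} yields a negative logarithmic limit $a_\infty\in\Delta_Z$, and \Cref{cor:gooda} then produces a face $\Gamma$ of $P_Z$ that violates all three alternatives of \Cref{def:siphon}. Your reading of inclusion-maximality as the negation of the conclusion is exactly how the paper uses it.
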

\begin{proof}
Suppose, for contradiction, that $Z\neq \varnothing$ is inclusion-maximal among the zero supports of the $\omega$-limit points of the trajectory. 
 By \Cref{prop:Wprimepos},  there exists  a negative logarithmic limit $a_\infty \in \Delta_Z$ for $(x(t),Z,g)$. 
By \Cref{cor:gooda},  there exists  a face $\Gamma$ of $P_Z$ that is not  $Z$-transverse for $G_Z$,  
$a_\infty \in   N_Z(\Gamma) \cap \Trop^+(S_Z) \cap \Delta_Z$, and $N_Z^*(\Gamma)\cap \R^{Z}_{> 0} \neq \varnothing$. 
 Hence $Z$ is not positively transverse by \Cref{def:siphon}, a contradiction. 
\end{proof}

\begin{theorem}\label{thm:main}
For a positively transverse reaction network,  
no bounded positive trajectory of \eqref{eq:mass} has a boundary $\omega$-limit point. 
\end{theorem}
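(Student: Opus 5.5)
The plan is to argue by contradiction, combining \Cref{prop:main} with the two standard facts about zero supports of $\omega$-limit points recalled at the start of \Cref{sec:positive}. Suppose $x(t)$ is a bounded positive trajectory with a boundary $\omega$-limit point. The set of zero supports of its $\omega$-limit points is a finite nonempty family of nonempty subsets of $\{1,\dots,n\}$, so we may choose one, $Z$, that is inclusion-maximal, realized by some $x^*\in\omega(x(t))$. The goal is to show that $Z$ is a relevant siphon. Then positive transversality of the network makes $Z$ a positively transverse siphon, and \Cref{prop:main} yields a strictly larger zero support $Z'\supsetneq Z$ of an $\omega$-limit point, contradicting maximality.

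\emph{Relevance.} Since the compatibility class is closed and contains the trajectory, $x^*\in x(0)+S$. Thus $(x^*+S)\cap\R^n_{>0}\ni x(0)$ is nonempty. By the fact recalled before \Cref{def:siphon}, the zero support $Z$ of $x^*$ cannot contain the support of a nonnegative conservation vector. Hence $Z$ is relevant as soon as it is a siphon.

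\emph{Siphon property.} This is the main step. Here I would use the classical argument of Angeli--De Leenheer--Sontag: the zero support of an $\omega$-limit point of a positive trajectory is a siphon. I would either cite it or reprove it directly. For the direct proof, suppose $Z$ is not a siphon. Then there is a reaction $y\to y'$ with $y'_i>0$ for some $i\in Z$ and $y_j=0$ for all $j\in Z$, so $y$ is supported outside $Z$ and $(x^*)^y>0$. At $x^*$ we have $x^*_i=0$, and every monomial with a negative coefficient in $f_i$ has $y_i>0$ and therefore vanishes. So $f_i(x^*)\ge \kappa_{y\to y'}y'_i(x^*)^y>0$. We now use that $\omega(x(t))$ is invariant and compact, since the trajectory is bounded. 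The trajectory through $x^*$ stays in $\omega(x(t))$, both forward and backward, and has $\dot x_i(0)>0$ with $x_i(0)=0$. Therefore $x_i(s)<0$ for small $s<0$, which contradicts $\omega(x(t))\subseteq\R^n_{\ge0}$.

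If needed, a cleaner variant of the last step only uses forward time. For small $s>0$ the point $\phi_s(x^*)\in\omega(x(t))$ has zero support $Z\setminus\{i\}$ together with whatever coordinates outside $Z$ vanish, and those stay positive for small $s$. That removes $i$ but does not enlarge the support, so it gives no contradiction with maximality. So I would rely on the backward-invariance argument, which is valid for compact $\omega$-limit sets. I expect this to be the only delicate point: making sure backward invariance within $\omega(x(t))$ (and hence nonnegativity) is available. With $Z$ a relevant siphon, the contradiction from \Cref{prop:main} completes the proof.
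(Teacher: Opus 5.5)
Your proposal is correct and follows the paper's proof. The paper likewise takes an inclusion-maximal zero support $Z$, cites \cite[Prop.~5.4]{AngeliDeLeenheerSontag2007} for the siphon property, gets relevance from the nonnegative-conservation-vector fact, and contradicts maximality via \Cref{prop:main}. Your direct backward-time argument for the siphon step is a valid self-contained replacement for that citation: $f_i(x^*)>0$ with $x^*_i=0$ would give $\phi_s(x^*)_i<0$ for small $s<0$, while $\phi_s(x^*)\in\omega(x(t))\subseteq\R^n_{\geq 0}$ by continuous dependence.
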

\begin{proof}
By \cite[Prop. 5.4]{AngeliDeLeenheerSontag2007}, the zero support $Z$ of an $\omega$-limit point is a siphon, and as noted above, $Z$ must be a relevant siphon. The statement now follows 
by applying  \Cref{prop:main} to all inclusion-maximal zero supports of the $\omega$-limit set of a bounded trajectory.
 \end{proof}
 
 We immediately obtain the following corollary on boundary steady states. 
 
 \begin{corollary}\label{cor:steady}
For a positively transverse reaction network with mass-action kinetics,  no positive trajectory converges to a boundary 
steady state. 
\end{corollary}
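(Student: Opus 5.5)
The natural approach is to reduce \Cref{cor:steady} to \Cref{thm:main}. Let $x(t)$ be a positive trajectory of \eqref{eq:mass} for a positively transverse network, and suppose for contradiction that $x(t)$ converges to a boundary steady state $x^*\in\R^n_{\geq 0}\setminus\R^n_{>0}$ as $t\to\infty$. Convergence implicitly requires that $x(t)$ is defined for all $t\geq 0$, so $\omega(x(t))$ makes sense.

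\textbf{Step 1: boundedness.} On $[0,\infty)$ the trajectory $x(t)$ is continuous and has the finite limit $x^*$. Fix $T$ such that $\|x(t)-x^*\|\leq 1$ for all $t\geq T$. The compact piece $x([0,T])$ is bounded, and for $t\geq T$ the trajectory lies in the unit ball around $x^*$. Hence $x(t)$ is bounded for all $t\geq 0$.

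\textbf{Step 2: the $\omega$-limit set.} Since $x(t)\to x^*$, every sequence $x(t_j)$ with $t_j\to\infty$ converges to $x^*$. Therefore $\omega(x(t))=\{x^*\}$. Because $x^*$ lies on the boundary of $\R^n_{\geq 0}$, the bounded positive trajectory $x(t)$ has a boundary $\omega$-limit point.

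\textbf{Step 3: contradiction.} Step 2 contradicts \Cref{thm:main}, which asserts that no bounded positive trajectory of a positively transverse network has a boundary $\omega$-limit point. So no positive trajectory converges to a boundary steady state. The fact that $x^*$ is a steady state is not actually needed; the argument shows that no positive trajectory converges to any boundary point at all.

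There is no real obstacle here. The only point needing care is that convergence gives boundedness, which is what \Cref{thm:main} requires, and Step 1 settles this.
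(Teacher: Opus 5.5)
Your proof is correct and is the paper's argument: the paper states the corollary as an immediate consequence of \Cref{thm:main}, since a convergent positive trajectory is bounded and its $\omega$-limit set is $\{x^*\}$, a boundary point. You are also right that the steady-state hypothesis is never used; in fact the limit of a convergent trajectory is automatically a steady state, so the two formulations are equivalent.
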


\begin{figure}[t]
\centering
\begin{minipage}[h]{0.3\textwidth}
\centering
\begin{tikzpicture}[scale=0.8,
    x=2cm,y=2cm,
    >=Latex,
    complex/.style={circle, fill=black, inner sep=1.5pt},
    rxn/.style={->, line width=0.9pt, shorten >=5pt, shorten <=5pt},
    gridline/.style={gray!40, line width=0.3pt}
]

\draw[fill=blue!5,draw=blue!5]
    (1,0) --     (0,1) --    (0,2) --    (1,2) --    cycle;
    
\draw[gridline] (0,0) grid (1.5,2.5);

\node[complex] (X1) at (1,0) {};
\node[below right=0pt] at (X1) {\small $X_1$};

\node[complex] (X2) at (0,1) {};
\node[left=3pt] at (X2) {\small  $X_2$};

\node[complex] (2X2) at (0,2) {};
\node[above left=1pt] at (2X2) {\small $2X_2$};

\node[complex] (X1X2) at (1,1) {};
\node[below right=1pt] at (X1X2) {\small $X_1+X_2$};

\node[complex] (X12X2) at (1,2) {};
\node[above right=1pt] at (X12X2) {\small $X_1+2X_2$};

\draw[rxn] (X2) -- (2X2);

\draw[rxn]
  ($(X1)+(0.035,0.035)$) --
  ($(X2)+(0.035,0.035)$);

\draw[rxn]
  ($(X2)+(-0.035,-0.035)$) --
  ($(X1)+(-0.035,-0.035)$);

\draw[rxn]
  ($(X1X2)+(0.06,0)$) --
  ($(X12X2)+(0.06,0)$);

\draw[rxn]
  ($(X12X2)+(-0.035,0)$) --
  ($(X1X2)+(-0.035,0)$);
\end{tikzpicture}
(a)
\end{minipage}\hfill
\begin{minipage}[h]{0.34\textwidth}
\centering
\begin{tikzpicture}[scale=0.8,
    x=2cm,y=2cm,
    >=Latex,
    complex/.style={circle, fill=black, inner sep=1.5pt},
    rxn/.style={->, line width=0.9pt, shorten >=5pt, shorten <=5pt},
    gridline/.style={gray!40, line width=0.3pt}
]

\draw[fill=blue!5,draw=blue!5]
    (0,0) -- (2,0) --      (0,2)  --    cycle;

\draw[gridline] (0,0) grid (2,2.5);

\node[complex] (Z) at (0,0) {};
\node[below=3pt] at (Z) {$0$};

\node[complex] (X1) at (1,0) {};
\node[above right=1pt] at (X1) {\small $X_1$};

\node[complex] (X2) at (0,1) {};
\node[above right=1pt] at (X2) {\small $X_2$};

\node[complex] (2X2) at (0,2) {};
\node[left=3pt] at (2X2) {\small $2X_2$};

\node[complex] (2X1) at (2,0) {};
\node[right=3pt] at (2X1) {\small $2X_1$};

\node[complex] (X1X2) at (1,1) {};
\node[above right=1pt] at (X1X2) {\small $X_1+X_2$};

\draw[rxn]  ($(2X1)+(0,-0.06)$) --   ($(Z)+(0,-0.06)$);
 \draw[rxn]  ($(2X2)+(-0.06,0)$) --   ($(Z)+(-0.06,0)$);
 \draw[rxn] (X1) -- (X1X2);
\draw[rxn] (X2) -- (X1X2);

\end{tikzpicture}
(b)
\end{minipage}\hfill
\begin{minipage}[h]{0.3\textwidth}
\centering
\begin{tikzpicture}[scale=0.8,
    x=2cm,y=2cm,
    >=Latex,
    complex/.style={circle, fill=black, inner sep=1.5pt},
    rxn/.style={->, line width=0.9pt, shorten >=5pt, shorten <=5pt},
    gridline/.style={gray!40, line width=0.3pt}
]

\draw[fill=blue!5,draw=blue!5]
    (1,1) --     (1,2) --    (0,3)   -- (0,2) --    cycle;

\draw[gridline] (0,0.5) grid (1.5,3);

\node[complex] (X1X2) at (1,1) {};
\node[below right=0pt] at (X1X2) {\small $X_1+X_2$};
\node[complex] (X12X2) at (1,2) {};
\node[below right=0pt] at (X12X2) {\small $X_1+2X_2$};
\node[complex] (3X2) at (0,3) {};
\node[left=3pt] at (3X2) {\small $3X_2$};
\node[complex] (2X2) at (0,2) {};
\node[left=1pt] at (2X2) {\small $2X_2$};

\draw[rxn] (X1X2) -- (2X2);
\draw[rxn]   ($(X12X2)+(0.035,0.035)$) --   ($(3X2)+(0.035,0.035)$);
\draw[rxn]   ($(3X2)+(-0.035,-0.035)$) --   ($(X12X2)+(-0.035,-0.035)$);
\end{tikzpicture}

\vspace{0.4cm}

(c)
\end{minipage}
\caption{The three reaction networks discussed in \Cref{ex:ex}.}\label{fig:1}
\end{figure}

 \begin{example}\label{ex:ex}
We consider three networks with two species in \Cref{fig:1}. 
Network (a) is positively transverse. There is only one relevant siphon $Z=\{1,2\}$, which  is positively transverse: the only faces $\Gamma$ for which $N^*_Z(\Gamma)\cap \R^Z_{>0} \neq \varnothing$ are the vertex $X_1$, the vertex $X_2$, or the  edge joining them. Each of them has a reaction with reactant in the face and product outside. Hence the  faces are   $Z$-transverse.

For network (b),  the relevant siphon $Z=\{1,2\}$ has only one face for which  $N^*_Z(\Gamma)\cap \R^Z_{>0} \neq \varnothing$, namely $\Gamma=\{0\}$. There is no reaction with reactant $0$, and hence $\{0\}$ is not  $Z$-transverse.  The third condition in \Cref{def:siphon} also fails, as $S_Z=\R^2$. The network is not positively transverse.

For network (c), the siphon $Z=\{1,2\}$ is not relevant as $(1,1)$ is a nonnegative conservation vector with support in $Z$. The only other siphon $\{2\}$ is positively transverse, and hence so is the network. 
 \end{example}

 \begin{example}\label{ex:trop}
 Consider the following network with three species:
 \begin{align*}
 X_2  & \ce{->} 2X_2+X_3   &     X_1+X_2+2X_3 & \ce{->} X_1 +X_3 \\[5pt]
 X_1+X_2 & \ce{<=>} 2X_1+2X_2 &
  X_1+2X_3   \ce{->} 2X_1+ X_3  & \ce{->} 4X_1+X_2 \,.
 \end{align*}
We show that the network is positively transverse and hence by \Cref{thm:main}, no bounded positive trajectory has a  boundary $\omega$-limit point. 
 The stoichiometric subspace $S$ has dimension $2$ and is generated by $(1,1,0),(0,1,1)$. Hence, 
 \[ S^\perp=\langle (1,-1,1)\rangle\,.\]
From this we obtain that all siphons are relevant. There are four siphons: $\{1\}$, $\{1,2\}$, $\{2,3\}$ and $\{1,2,3\}$. 
 We start by studying $Z\coloneqq \{1,2,3\}$ and consider $a\in \R^3_{>0}$. Since all entries of $a$ are positive, the face  of $P_Z$ determined by $a$ is:
 \[  \begin{cases}
   \conv (X_2,X_1+X_3)   & \text{if } a_2=a_1+a_3  \\
 \{X_2\} & \text{if } a_2<a_1+a_3 \\
  \{X_1+X_3\} & \text{if } a_2>a_1+a_3\,. 
 \end{cases}
 \]
 In the first two cases, the reaction $X_2   \ce{->} 2X_2+X_3$  shows that  the face is   $Z$-transverse. For the face $\Gamma\coloneqq \{X_1+X_3\}$, for which $N_Z(\Gamma)= \{a\in \R^3_{\geq 0} : a_2\geq a_1+a_3\}$, there is no outgoing reaction, so we need to compute the positive tropicalization of $S_Z$. 
As   $S_Z^\perp=\langle (1,-1,1)\rangle$, there are two signed circuits $(C^+,C^-)$ of  $S_Z^\perp$: 
\[ (\{1,3\},\{2\}) \qquad \text{and}\qquad (\{2\},\{1,3\}) \,.\]
 This gives that 
 \[ \Trop^+(S_Z)= \{a\in \R^3 : a_1=a_2\leq a_3 \text{ or }a_2=a_3 \leq a_1 \} = \{a\in \R^3 : a_2=\min \{a_1,a_3\}\}\,. \]
 From this we obtain that if $a\in \Trop^+(S_Z)\cap N_Z(\Gamma)$, then
 \[ a_2\geq a_1+a_3  \geq \min \{a_1,a_3\} = a_2\]
 and hence all inequalities are equalities. 
In particular the equality $a_1+a_3 = \min \{a_1,a_3\}$ with $a_1,a_3\geq 0$ forces $a_1=a_3=0$, hence $a_2=0$, and $a=0\notin \Delta_Z$. We conclude that $\Trop^+(S_Z)\cap N_Z(\Gamma)\cap \Delta_Z=\varnothing$ and hence by \Cref{def:siphon}, $Z$ is positively transverse. 
  The sets $\Trop^+(S_Z)\cap \Delta_Z$ and $N_Z(\Gamma)\cap \Delta_Z$  are illustrated in \Cref{fig:2}.  
 
It is more straightforward to show that the siphons  $\{1\}$, $\{1,2\}$, $\{2,3\}$ are positively transverse, as in this case all faces determined by a positive vector are  $Z$-transverse. 
 \end{example}
 
  \begin{figure}[t]
\begin{tikzpicture}[scale=1,
    x={(0cm,0cm)},
    y={(4cm,0cm)},
    z={(2cm,3.464cm)}
]

\coordinate (A1) at (1,0,0);
\coordinate (A2) at (0,1,0);
\coordinate (A3) at (0,0,1);

\coordinate (C) at ({1/3},{1/3},{1/3});

\coordinate (P1) at (0,{1/2},{1/2});
\coordinate (P2) at ({1/2},{1/2},0);

\fill[green!60!blue!20!white]
    (P1) -- (P2) -- (A2) -- cycle;

\draw[gray,thick]
    (A1) -- (A2) -- (A3) -- cycle;

\draw[blue!50!green, line width=2pt] (C) -- (A3);
\draw[blue!50!green, line width=2pt] (C) -- (A1);

\node[below left]  at (A1) {\small $(1,0,0)$};
\node[below right] at (A2) {\small  $(0,1,0)$};
\node[above]       at (A3) {\small  $(0,0,1)$};
\node[below]  at ({1/2},{1/2},0) {\small $(\tfrac12,\tfrac12,0)$};
\node[right]  at (0,{1/2},{1/2}) {\small $(0,\tfrac12,\tfrac12)$};
\node[right] at (C) {\small  $(\tfrac13,\tfrac13,\tfrac13)$};

\end{tikzpicture}
 
 \caption{Illustration of  \Cref{ex:trop} for the siphon $Z=\{1,2,3\}$ in the standard simplex $\Delta_Z$. The set $\Trop^+(S_Z)\cap \Delta_Z$ consists of the two segments from $(\tfrac{1}{3},\tfrac{1}{3},\tfrac{1}{3})$ to $(1,0,0)$ and $(0,0,1)$ respectively. The set  $N_Z(\Gamma)\cap \Delta_Z$ is the filled triangle with vertices $(0,1,0),(0,\tfrac{1}{2},\tfrac{1}{2}),(\tfrac{1}{2},\tfrac{1}{2},0)$.    }\label{fig:2}
 \end{figure}

 \begin{remark}
The strategy of proof of \Cref{thm:main} follows the same principles as several earlier works, notably \cite{GMS2,Anderson11}: on the one hand, convergence to the boundary forces some Lyapunov-type function to be decreasing along a sequence of points of the trajectory; on the other hand, the study of the logarithmic limit of these points in conjunction with the geometry of the convex hull of the complexes of the network gives that the function has nonnegative derivative. 
  
 The function $g$ used in this work arises from the evaluation of the derivative of the function $-W$ in the proof of \Cref{prop:Wprimepos}, which was  also used in previous work, e.g., \cite[Thm. 7.5 and \S8.2]{GMS2}. Our chosen evaluation  allows us to more easily control the simultaneous convergence of the coefficients and  the monomials through the normalization of the monomials. 
 This is done at the cost of including complexes that only appear as products of reactions in our analysis of the monomials. 
  With this, we can avoid the tier \cite{Anderson11} and jet   \cite{GMS2} decompositions and provide a direct study of negative logarithmic limits in \Cref{prop:selectpos}.    
  \end{remark}

  \begin{remark}
Every nonrelevant siphon $Z$ satisfies $\Trop^+(S_Z)=\varnothing$ and hence the third condition in \Cref{def:siphon} holds. Requiring all siphons to be positively transverse in \Cref{def:positive} is  thus equivalent to requiring only the relevant siphons to be positively transverse. Checking relevance however is an easy verification worth performing as a first step and the chosen formulation emphasizes this.  
\end{remark}

 \begin{remark}
 In order to  computationally decide whether a network is positively transverse, one can start by identifying all siphons and selecting the relevant ones. Several algorithms in the Petri net literature allow for siphon identification, e.g.,\cite{KinuyamaMurata1986,NabliFagesMartinezSoliman2012}; see also \cite{ShiuSturmfels} for an algebraic approach. After every relevant siphon $Z$ has been identified, the next step is to decide whether $Z$ is positively transverse. To this end, one can start by finding $N_Z^*(\Gamma)$ and $N_Z(\Gamma)$ for all faces of $P_Z$. Among the faces with $N_Z^*(\Gamma)\cap \R^{Z}_{> 0} \neq \varnothing$, find  the set $\mathcal{P}$ of faces that are not  $Z$-transverse by simply identifying the complexes that belong to the face. 

If $\mathcal{P}=\varnothing$, then $Z$ is positively transverse. Otherwise, $Z$ is positively transverse if and only if 
$N_Z(\Gamma)\cap \Trop^+(S_Z)\cap \Delta_Z = \varnothing$ for all $\Gamma\in \mathcal{P}$.  The positive tropicalization of $S_Z$ agrees with the positive Bergman fan of the oriented matroid of $S_Z$. This  can be computed by first computing the Bergman fan of the matroid of $S_Z$ and then following \cite[Alg. 1]{RoseTelek24}. 

All polyhedral computations such as finding inner normal cones and the Bergman fan can be performed, 
for instance, using \texttt{polymake} \cite{polymake} or \texttt{oscar.jl} in Julia \cite{OSCAR,OSCAR-book}. 
 \end{remark}

 \begin{remark}\label{rem:literature}
 In \cite{CNP}   \emph{lower endotactic} networks were introduced and extended in \cite{Pantea} (see also \cite{JohnstonPanteaDonnell2016} for a computational approach to decide lower endotacticity). 
 For every nonnegative vector $a$, consider the subnetwork $G_a=(V_a,E_a)$ of $G$ obtained by removing the reactions $y\rightarrow y'$  satisfying $a\cdot (y'-y)= 0$. 
Then a network is said to be lower endotactic if $a\cdot (y'-y)> 0$ for every reaction $y\rightarrow y'$ of $G_a$ whenever $y\in \argmin \{a \cdot \tilde{y}:  \tilde{y}\rightarrow \tilde{y}'\in E_a\}$.   This is a ``nonnegative version'' of endotactic networks, where this condition is required for all direction vectors. 

Networks (a) and (b) in \Cref{fig:1} are lower endotactic, but (c) is not. Network (c) is in particular not endotactic, and we note also that it is not covered by the siphon-based persistence criterion of \cite{AngeliDeLeenheerSontag2007}, since it has a relevant siphon (called \emph{critical siphon} in loc. cit.), namely $\{2\}$.

Pantea shows that if the network has a two-dimensional stoichiometric subspace and lower endotactic stoichiometric subnetworks, then any bounded positive trajectory  is persistent and in particular does not have boundary $\omega$-limit points \cite[Thm. 5.1]{Pantea}. This was used to show that the GAC holds if the stoichiometric subspace has dimension three.

In \cite{GMS2}, the authors studied  \emph{strongly endotactic networks}, which are endotactic networks such that, for any vector $a\in \R^n$ not in $S^\perp$, there exists a reaction $y\rightarrow y'$ such that 
$a\cdot  y'> a\cdot y$ and $y\in \argmin \{a \cdot \tilde{y}:  \tilde{y}\rightarrow \tilde{y}'\in E\}$.  They proved that strongly endotactic mass-action systems are permanent. In particular their positive trajectories are bounded and persistent, and consequently the GAC holds for strongly endotactic complex-balancing systems.
The approach is revisited in \cite{anderson:cappelletti:tier}, where  consequences of the tier structure of strongly endotactic networks for the stochastic modeling of the network are also obtained.  
 
 Positive transversality may look like a strengthening of lower endotacticity, since we also require a reaction to point into the polytope whenever the face is selected by a positive vector. However, the two classes are incomparable: network (b) in \Cref{fig:1} is lower endotactic but not positively transverse, while network (c)  is  positively transverse but not lower endotactic. 
 \end{remark}

 We conclude by considering  the special case where \eqref{eq:mass} has a positive complex-balancing steady state.  
Then the network is necessarily weakly reversible \cite{Horn1972,Feinberg1972} and \eqref{eq:mass} has bounded trajectories. The existence of one positive complex-balancing steady state forces all positive steady states to be complex-balancing as well and there is exactly one positive steady state in each stoichiometric compatibility class that intersects the positive orthant \cite{HornJackson1972}. 
\Cref{thm:main}   when restricted to systems with positive complex-balancing steady states gives an alternative proof of the   GAC for 
positively transverse networks.  
 
 \begin{theorem}[GAC for positively transverse networks]\label{thm:gac1}
 Consider a   reaction network such that \eqref{eq:mass} admits a positive complex-balancing steady state. 
Assume that  every relevant siphon $Z\neq \{1,\dots,n\}$ that is not a singleton is positively transverse.

Then every positive trajectory $x(t)$  converges to the unique complex-balancing positive steady state  in the stoichiometric compatibility class of $x(0)$.
 \end{theorem}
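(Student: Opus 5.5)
The plan is to reduce to the Siegel--MacLean criterion. Since \eqref{eq:mass} admits a positive complex-balancing steady state, the network is weakly reversible, every positive trajectory is bounded, and each positive stoichiometric compatibility class contains exactly one positive steady state, which is complex-balancing. By \cite{SiegelMacLean} (together with the Horn--Jackson Lyapunov function) it suffices to show that no positive trajectory has an $\omega$-limit point on the boundary of $\R^n_{\geq 0}$. Suppose one does, and let $Z$ be an inclusion-maximal zero support of a point of $\omega(x(t))$. As in the proof of \Cref{thm:main}, \cite[Prop. 5.4]{AngeliDeLeenheerSontag2007} shows that $Z$ is a siphon. The remark on nonnegative conservation vectors shows that $Z$ is relevant, since the $\omega$-limit point lies in $x(0)+S$, which meets $\R^n_{>0}$. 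If $Z$ is positively transverse, \Cref{prop:main} contradicts the maximality of $Z$. So it remains to treat the two cases excluded by the hypothesis: $Z=\{1,\dots,n\}$ and $|Z|=1$.

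\emph{Case $Z=\{1,\dots,n\}$.} Here the $\omega$-limit point is $0$, so $0$ lies in the class $(x(0)+S)\cap\R^n_{\geq 0}$. The point $0$ is an extreme point of $\R^n_{\geq 0}$, and hence it is a vertex of the polyhedron $(x(0)+S)\cap \R^n_{\geq 0}$. By \cite{TDS} (see also \cite{Anderson08}), vertices of stoichiometric compatibility classes are not $\omega$-limit points of positive trajectories of complex-balancing systems. This is a contradiction. An alternative is to observe that relevance forces $S^\perp\cap\R^n_{\geq 0}=\{0\}$, but I would simply invoke the vertex result.

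\emph{Case $Z=\{i\}$.} Here I would show directly that every singleton siphon is positively transverse for a weakly reversible network, so \Cref{prop:main} applies again. We have $\Delta_Z=\{1\}$, and the only face $\Gamma$ of $P_Z$ with $N_Z^*(\Gamma)\cap\R^Z_{>0}\neq\varnothing$ is the face minimizing $y_i$ over $V_Z$. Take a linkage class $\mC$ of $G_Z$ containing a complex $y$ of minimal $y_i$. By \eqref{eq:GZ}, $\mC$ also contains a complex with strictly larger $i$-th coordinate. Since $\mC$ is strongly connected, some reaction of $\mC$ leaves the set of complexes of $\mC$ at the minimal level and goes to a complex outside $\Gamma$. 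Hence $\Gamma$ is $Z$-transverse for $G_Z$. Provided $G_Z\neq\varnothing$, this makes $\{i\}$ positively transverse. If $G_Z=\varnothing$, then \Cref{cor:gooda} already rules out negative logarithmic limits, so \Cref{prop:Wprimepos} gives the contradiction.

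Combining the three cases, no maximal zero support exists, so $\omega(x(t))\subseteq\R^n_{>0}$, and Siegel--MacLean yields convergence to the unique complex-balancing steady state. I expect the main obstacle to be the case $Z=\{1,\dots,n\}$. There one must confirm that the cited vertex result applies to trajectories merely having $0$ as an $\omega$-limit point rather than converging to it. If that turns out to be delicate, the fallback is to apply \Cref{cor:gooda} with $Z$ the full set and use weak reversibility to show that the face of $P_Z$ selected by a positive vector minimizing over all complexes is $Z$-transverse whenever it is not all of $P_Z$.
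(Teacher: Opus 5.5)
Your proof is correct. Its skeleton matches the paper's proof:
\begin{itemize}
\item reduce to excluding boundary $\omega$-limit points via Siegel--MacLean;
\item take an inclusion-maximal zero support $Z$, which is a relevant siphon;
\item apply \Cref{prop:main} when $Z$ is positively transverse;
\item exclude $Z=\{1,\dots,n\}$ with the vertex result of \cite{TDS}.
\end{itemize}
The citation to \cite{TDS} is exactly the one the paper uses. That result is stated for $\omega$-limit points, not only for convergence, so the obstacle you anticipate there does not arise.

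The genuine difference is the singleton case. The paper cites \cite[Cor. 3.3]{AndersonShiu10}: a point with zero support $\{i\}$ lies in the relative interior of a facet of its compatibility class, so a singleton cannot be an inclusion-maximal zero support. You instead show, inside the paper's own framework, that for a weakly reversible network every singleton siphon is positively transverse. Your argument runs as follows:
\begin{itemize}
\item the only face selected by a positive vector is the level set where $y_i$ attains its minimum over $V_Z$;
\item each linkage class of $G_Z$ is nonconstant in $y_i$ by \eqref{eq:GZ};
\item strong connectivity then yields a reaction leaving that level set;
\item \Cref{prop:main} finishes, or \Cref{cor:gooda} when $G_Z=\varnothing$.
\end{itemize}
Your route is self-contained and shows that the singleton exclusion in the hypothesis is automatic under weak reversibility. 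The paper's route is a one-line appeal to the facet literature.

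This argument is special to $|Z|=1$, though. For larger $Z$, a face selected by a positive vector can contain an entire linkage class whose reaction vectors are orthogonal to that vector. For example, take $a=(1,\dots,1)$ and a linkage class $X_1\rightleftharpoons X_2$ where all other complexes have larger molecularity. The fallback you sketch for $Z=\{1,\dots,n\}$ would therefore fail in general; fortunately it is not needed.
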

\begin{proof}
By \cite[Theorem 3.2]{SiegelMacLean}, see also \cite{TDS,Sontag01}, the $\omega$-limit set of a trajectory $x(t)$ with positive initial condition $x(0)$  either  consists of the unique complex-balancing steady state $c^*$  in the stoichiometric compatibility class of $x(0)$ or is contained in the boundary of $\R^n_{\geq 0}$. Since the trajectory is bounded by \cite[Lemma 3.5]{SiegelMacLean}, the trajectory converges to $c^*$ if we can exclude the latter scenario.
  
By \cite[Prop. 20, Rem. 21]{TDS} the origin is not an $\omega$-limit point. 
By \cite[Cor. 3.3]{AndersonShiu10}, a singleton cannot be an inclusion-maximal zero support of an $\omega$-limit point  (as any such point will necessarily lie in the relative interior of a facet of the stoichiometric compatibility class).  
If $x(t)$ has a boundary $\omega$-limit point,  then there exists an inclusion-maximal zero support $Z$ of $\omega$-limit points, which is a relevant siphon, and  is neither a singleton nor $\{1,\dots,n\}$.  By hypothesis, $Z$ is positively transverse, and we obtain a contradiction by \Cref{prop:main}. 
\end{proof}

\small

\begin{thebibliography}{10}

\bibitem{OSCAR}
\normalfont{The OSCAR Team.} \textit{OSCAR -- Open Source Computer Algebra
  Research system}.
\newblock Available at \url{https://www.oscar-system.org}.

\bibitem{alessandrini}
D.~Alessandrini.
\newblock Logarithmic limit sets of real semi-algebraic sets.
\newblock {\em Adv. Geom.}, 13(1):155--190, 2013.

\bibitem{Anderson08}
D.~F. Anderson.
\newblock Global asymptotic stability for a class of nonlinear chemical
  equations.
\newblock {\em SIAM J. Appl. Math.}, 68(5):1464--1476, 2008.

\bibitem{Anderson2011}
D.~F. Anderson.
\newblock Boundedness of trajectories for weakly reversible, single linkage
  class reaction systems.
\newblock {\em J. Math. Chem.}, 49(10):2275--2290, 2011.

\bibitem{Anderson11}
D.~F. Anderson.
\newblock A proof of the global attractor conjecture in the single linkage
  class case.
\newblock {\em SIAM J. Appl. Math.}, 71(4):1487--1508, 2011.

\bibitem{Anderson2020Classes}
D.~F. Anderson, J.~D. Brunner, G.~Craciun, and M.~D. Johnston.
\newblock On classes of reaction networks and their associated polynomial
  dynamical systems.
\newblock {\em J. Math. Chem.}, 58(9):1895--1925, 2020.

\bibitem{anderson:cappelletti:tier}
D.~F. Anderson, D.~Cappelletti, J.~Kim, and T.~D. Nguyen.
\newblock Tier structure of strongly endotactic reaction networks.
\newblock {\em Stoch. Process. Their Appl.}, 130(12):7218--7259, 2020.

\bibitem{AndersonShiu10}
D.~F. Anderson and A.~Shiu.
\newblock The dynamics of weakly reversible population processes near facets.
\newblock {\em SIAM J. Appl. Math.}, 70(6):1840--1858, 2010.

\bibitem{AngeliDeLeenheerSontag2007}
D.~Angeli, P.~De~Leenheer, and E.~D. Sontag.
\newblock A {P}etri net approach to the study of persistence in chemical
  reaction networks.
\newblock {\em Math. Biosci.}, 210(2):598--618, 2007.

\bibitem{ArdilaKlivansWilliams2006}
F.~Ardila, C.~Klivans, and L.~Williams.
\newblock The positive {B}ergman complex of an oriented matroid.
\newblock {\em Eur. J. Comb.}, 27(4):577--591, 2006.

\bibitem{BorosHofbauer2020}
B.~Bal{\'a}zs and H.~Josef.
\newblock Permanence of weakly reversible mass-action systems with a single
  linkage class.
\newblock {\em SIAM J. Appl. Dyn. Syst.}, 19(1):352--365, 2020.

\bibitem{craciun:GAC}
G.~Craciun.
\newblock Toric differential inclusions and a proof of the {G}lobal {A}ttractor
  {C}onjecture.
\newblock Available at {\tt arXiv:1501.02860v3}, 2026.
\newblock Version 3, September 2026.

\bibitem{TDS}
G.~Craciun, A.~Dickenstein, A.~Shiu, and B.~Sturmfels.
\newblock Toric dynamical systems.
\newblock {\em J. Symbolic Comput.}, 44(11):1551--1565, 2009.

\bibitem{CNP}
G.~Craciun, F.~Nazarov, and C.~Pantea.
\newblock Persistence and permanence of mass-action and power-law dynamical
  systems.
\newblock {\em SIAM J. Appl. Math.}, 73(1):305--329, 2013.

\bibitem{OSCAR-book}
W.~Decker, C.~Eder, C.~Fieker, M.~Horn, and M.~Joswig, editors.
\newblock {\em The {C}omputer {A}lgebra {S}ystem {OSCAR}: {A}lgorithms and
  {E}xamples}, volume~32 of {\em Algorithms and {C}omputation in
  {M}athematics}.
\newblock Springer, 1 edition, 2025.

\bibitem{DeshpandeGopalkrishnan2014}
A.~Deshpande and M.~Gopalkrishnan.
\newblock Autocatalysis in reaction networks.
\newblock {\em Bull. Math. Biol.}, 76(10):2570--2595, 2014.

\bibitem{Feinberg1972}
M.~Feinberg.
\newblock Complex balancing in general kinetic systems.
\newblock {\em Arch. Rational Mech. Anal.}, 49:187--194, 1972/73.

\bibitem{Feinberg1987}
M.~Feinberg.
\newblock Chemical reaction network structure and the stability of complex
  isothermal reactors--{I}. {T}he deficiency zero and deficiency one theorems.
\newblock {\em Chem. Eng. Sci.}, 42(10):2229--2268, 1987.

\bibitem{polymake}
E.~Gawrilow and M.~Joswig.
\newblock polymake: a framework for analyzing convex polytopes.
\newblock In {\em Polytopes - combinatorics and computation. DMV-seminar
  Oberwolfach, Germany, November 1997}, pages 43--73. Basel: Birkh{\"a}user,
  2000.

\bibitem{Gopalkrishnan2011Catalysis}
M.~Gopalkrishnan.
\newblock Catalysis in reaction networks.
\newblock {\em Bull. Math. Biol.}, 73(12):2962--2982, 2011.

\bibitem{manoj:cycles}
M.~Gopalkrishnan.
\newblock On the {L}yapunov function for complex-balanced mass-action systems.
\newblock arXiv:1312.3043, 2013.

\bibitem{GMS2}
M.~Gopalkrishnan, E.~Miller, and A.~Shiu.
\newblock A geometric approach to the {G}lobal {A}ttractor {C}onjecture.
\newblock {\em SIAM J. Appl. Dyn. Syst.}, 13(2):758--797, 2014.

\bibitem{Horn1972}
F.~Horn.
\newblock Necessary and sufficient conditions for complex balancing in chemical
  kinetics.
\newblock {\em Arch. Rational Mech. Anal.}, 49:172--186, 1972.

\bibitem{Horn74}
F.~Horn.
\newblock The dynamics of open reaction systems.
\newblock In {\em Mathematical aspects of chemical and biochemical problems and
  quantum chemistry, SIAM--AMS Proceedings, VIII}, pages 125--137, 1974.

\bibitem{HornJackson1972}
F.~Horn and R.~Jackson.
\newblock {General mass action kinetics}.
\newblock {\em Arch. Ration. Mech. Anal.}, 47(2):81--116, 1972.

\bibitem{JohnstonPanteaDonnell2016}
M.~D. Johnston, C.~Pantea, and P.~Donnell.
\newblock A computational approach to persistence, permanence, and
  endotacticity of biochemical reaction systems.
\newblock {\em J. Math. Biol.}, 72:467--498, 2016.

\bibitem{JohnstonSiegel}
M.~D. Johnston and D.~Siegel.
\newblock Weak dynamical nonemptiability and persistence of chemical kinetics
  systems.
\newblock {\em SIAM J. Appl. Math.}, 71(4):1263--1279, 2011.

\bibitem{KinuyamaMurata1986}
M.~Kinuyama and T.~Murata.
\newblock Generating siphons and traps by {P}etri-net representation of logic
  equations.
\newblock In {\em Proceedings of the 2nd IECE (Japan) Conference on Net
  Theory}, pages 93--100, 1986.

\bibitem{Lawrence-thesis}
R.~Lawrence.
\newblock {\em Simplicial Reaction Networks and Dynamics on Graphs}.
\newblock PhD thesis, University of California, Berkeley, 2023.

\bibitem{MarcondesFeliuWiuf2017}
M.~{Marcondes de Freitas}, E.~Feliu, and C.~Wiuf.
\newblock Intermediates, catalysts, persistence, and boundary steady states.
\newblock {\em J. Math. Biol.}, 74(4):887--932, 2017.

\bibitem{NabliFagesMartinezSoliman2012}
F.~Nabli, F.~Fages, T.~Martinez, and S.~Soliman.
\newblock A {B}oolean model for enumerating minimal siphons and traps in
  {P}etri nets.
\newblock In {\em Principles and Practice of Constraint Programming}, volume
  7514 of {\em Lecture Notes in Computer Science}, pages 798--814. Springer,
  2012.

\bibitem{AI-persistence}
{OpenAI}.
\newblock {Boundedness and persistence of weakly reversible mass-action
  systems}.
\newblock OpenAI Math Release preprint
  \href{https://github.com/openai/math/blob/main/preprints/Boundedness-and-persistence-of-weakly-reversible-mass-action-systems-September-25-2026/paper.pdf}{OAI:Boundedness-and-persistence-of-weakly-reversible-mass-action-systems-September-25-2026},
  2026.
\newblock Preprint dated September 25, 2026; publicly released on GitHub on
  October 6, 2026.

\bibitem{AI-permanence}
{OpenAI}.
\newblock {Uniform Permanence in Weakly Reversible Mass-Action Systems}.
\newblock OpenAI Math Release preprint
  \href{https://github.com/openai/math/blob/main/preprints/Uniform-Permanence-in-Weakly-Reversible-Mass-Action-Systems-October-5-2026/permanence.pdf}{OAI:Uniform-Permanence-in-Weakly-Reversible-Mass-Action-Systems-October-5-2026},
  2026.

\bibitem{Pantea}
C.~Pantea.
\newblock On the persistence and global stability of mass-action systems.
\newblock {\em SIAM J.\ Math.\ Anal.}, 44(3):1636--1673, 2012.

\bibitem{RoseTelek24}
K.~Rose and M.~L. Telek.
\newblock Computing positive tropical varieties and lower bounds on the number
  of positive roots.
\newblock {\em J. Symb. Comput.}, 132:102477, 2026.

\bibitem{ShiuSturmfels}
A.~Shiu and B.~Sturmfels.
\newblock Siphons in chemical reaction networks.
\newblock {\em Bull.\ Math.\ Biol.}, 72(6):1448--1463, 2010.

\bibitem{SiegelMacLean}
D.~Siegel and D.~MacLean.
\newblock Global stability of complex balanced mechanisms.
\newblock {\em J. Math. Chem.}, 27(1-2):89--110, 2000.

\bibitem{Sontag01}
E.~D. Sontag.
\newblock {Structure and stability of certain chemical networks and
  applications to the kinetic proofreading model of T-cell receptor signal
  transduction}.
\newblock {\em IEEE Trans. on Automat. Control}, 46(7):1028--1047, 2001.

\bibitem{SpeyerWilliams2005}
D.~Speyer and L.~Williams.
\newblock The tropical totally positive {G}rassmannian.
\newblock {\em J. Algebraic Combin.}, 22(2):189--210, 2005.

\bibitem{volpert}
A.~I. Vol'pert.
\newblock Differential equations on graphs.
\newblock {\em Math. {USSR}-{S}b.}, 17(4):571--582, 1972.

\bibitem{wiuf:GAC}
C.~Wiuf.
\newblock A proof of the {G}lobal {A}ttractor {C}onjecture in a special case.
\newblock arXiv:2609.24553, 2026.

\end{thebibliography}

\end{document}